\newtheorem{theorem}{Theorem}[section]
\newtheorem{lemma}[theorem]{Lemma}
\theoremstyle{definition}
\newtheorem{corollary}[theorem]{Corollary}
\newtheorem{example}[theorem]{Example}
\newtheorem{remark}[theorem]{Remark}
\newcounter{comcount}
\title{On rationality of verbal subsets in a group}
\author{A. Myasnikov, V. Roman'kov}
\date{  March 22, 2011}
\begin{document}
\maketitle

\begin{abstract}
Let $F$ be a free non-abelian group. We show that for any group word $w$ the set $w[F]$ of all values of $w$ in $F$ is rational in $F$ if and only if $w[F] = 1$ or $w[F] = F$. We generalize this to  a wide class of free products of groups. 
\end{abstract}

\tableofcontents

\section{Introduction}
\label{se:intro}

In this paper we study the structure and complexity of the verbal sets in free groups and free products of groups. The main result of the paper shows that proper verbal subsets of free non-abelian groups are not rational. We generalize this to a wide class of free products of groups and other groups which have such free products as their quotients.  In particular, the result holds for arbitrary finitely generated non-abelian  residually free groups, pure braid groups,  or non-abelian right angled Artin groups.

Following Gilman \cite{Gil} we define for a given group $G$ the set $Rat(G)$ of all {\em rational subsets} of $G$ as the closure of the set of all  finite subsets of $G$ under the rational operations: union, product, and generation of a submonoid (Kleene's star operation). Sometimes, according to the standard practice, we refer to the rational subsets of  a finitely generated free monoid as {\em regular subsets}. 
It is known (see \cite{Gil}) that a subset $L$ of a group $G$ is rational in $G$ if and only if $L$ is accepted by a finite automaton over $G$ (see  definitions in Section \ref{se:pre}). 

Let  $F(X)$ be a free group with a  basis $X$ and  $W \subseteq F(X)$ a subset of $F(X)$. An element $g$ in a group $G$ is called a {\em $W$-element} if $g$ is the image in $G$ of some word $w \in W$ under some homomorphism $F(X) \to G$.  By $W[G]$ we denote the set of all $W$-elements in $G$.  The set $W[G]$ generates the {\em verbal} subgroup $W(G)$. The verbal subgroups of groups were intensely studied in group theory especially with respect to relatively free groups and varieties of groups. We refer to the books  \cite{Neumann}, \cite{Segal}  for the general facts in this area.  

{\em $W$-width} (or {\em $W$-length})  is one of the key notions  concerning verbal  subgroups $W(G)$ of a group $G$.  The $W$-length $l_W(g)$ of an element $g \in W(G)$ is the the minimal natural number $n$ such that $g$ is a product of  $n$ $W$-elements in $G$ or their inverses.  The $W$-width of the verbal subgroup $W(G)$ is $\sup\{l_W(g) \mid g \in W(g)\}$. It is usually assumed that the  set $W$ is finite. In this case the  set $W[G]$  is just a verbal set $w(G)$ for a suitable single word $w$. Furthermore, for a finite $W$ if the verbal subgroup $W(G)$ is of finite $W$-width then it is equal to $w[G]$ for some single word $w$ (not necessary from $W$). In particular, the length of $W(G)$ depends on the set $W$. From now on we will always assume that $W$ is just a singleton $W = \{w\}$ and refer to the related verbal set and the $W$-length as  $w[G]$ and $w$-length.
  Usually, it  is  very hard to compute the $w$-length of a verbal subgroup $w(G)$ or the $w$-length of  an element $g \in w(G)$. 
The first question of this type  goes back to the Ore's paper \cite{Ore} where he asked whether  the commutator length (i.e., the $[x,y]$-length) of every element in a non-abelian finite simple group is equal to 1 ({\em Ore Conjecture}). Only recently the conjecture was established by  Liebeck, O'Brian, Shalev and  Tiep \cite{LBST}. For recent spectacular results on the $w$-length in finite simple groups we refer to the papers \cite{LS, Sh1} and a  book \cite{Segal}.

Sometimes, for example in the presence of negative curvature,  it is convenient to replace the unruly standard $W$-length with a more smooth {\em stable } $W$-length,  which is defined for an element $g \in G$ as the limit $\lim_{n\to \infty}\frac{l_W(g)}{n}$ (which always exists).
In some sense the stable commutator length relates to  an $L^1$  filling norm with $\mathbb{Q}$ coefficients, introduced by Gromov in \cite{Gr1}, see also Gersten's paper \cite{Ger}. In \cite{Gr2} Gromov studied stable commutator length and its relation with  bounded cohomology. We refer to a book \cite{Cal1}  by Calegary  on stable length of elements in groups.  

On the other hand, Calegari showed in \cite{Cal2} that if a group $G$ satisfies a non-trivial law then the stable commutator length is equal to 0 for every element from $[G,G]$.  In particular, in solvable groups the stable commutator length is not very helpful, instead,  the standard $W$-length was studied intensely.  
We refer to papers \cite{Stroud},  \cite{Rhecomm}, \cite{Romankov}, and a book \cite{Segal} on  the standard width of verbal subgroups in groups.

Properties of verbal sets $W[G]$ themselves play an important part in group theory.
For example, the Membership Problem (MP) to the set $W[G]$ in $G$ is equivalent of solving in $G$ the homogeneous equations of the type $w(X) = g$, where $w \in W$ and $g \in G$. The  Endomorphism Problem in $G$, which asks to decide for  given elements $g, h \in G$ if there is an endomorphism $\phi \in End(G)$ such that $\phi(g) = h$,  is just a particular case of such a problem.  Since the Diophantine Problem (of solving arbitrary equations) is decidable in a free group $F$ \cite{Mak}, MP to the verbal sets in $F$ is also decidable.  However,   Razborov showed in \cite{Raz} that the solution sets of quadratic homogeneous equations of the type $\Pi_{i = 1}^n[x_i,y_i] = g$ may have a very complex  structure.

In this paper we focus on the complexity of verbal sets in free groups from the formal language theory view point. 
The goal is to determine the proper place of various verbal sets of a free group $F$ (and some other "free-like" groups)  in the hierarchy of formal languages.  Since  proper verbal subsets of $F$ are not rational in $F$, they are not regular in $F$, so they sit in some higher levels of the hierarchy of formal languages.  What these levels are precisely is an interesting open problem.  

The paper is organized as follows. Section 2 contains necessary definitions and facts about rational subsets in groups. In Section 3 we discuss Rhemtulla's gap theorem , which is the  main technical tool of  our approach to  verbal sets in free groups and free products of groups. 
In Section 4 we establish some key lemmas on general  properties of  rational verbal subsets, while in 
Section 5 we prove the main results  of the paper.

\section{Preliminaries}
\label{se:pre}

Let $F=F(X)$ be a free group with basis $X = \{x_1, \ldots,x_n, \ldots \}$, viewed as the set of reduced words in $X \cup X^{-1}$ with the standard multiplication. Fix $w = w(x_1, x_2, ..., x_n) \in F(X)$. An element $g$ of a group $G$ is called a  {\em $w$-element }  if $g = w(g_1, g_2, ..., g_n)$ for some  $g_1, \ldots, g_n  \in G$.  We denote the set of all $w$-elements of $G$ by $w[G]$. A subset $M \subseteq G$ is a {\em verbal subset} of $G$ if $M = w[G]$ for some word $w \in F(X).$ The subgroup $w(G)$ generated by $w[G]$ is  the $w$-{\it verbal subgroup}  of $G$, and a  subgroup of $G$ is called {\em verbal}  if it is equal to $w(G)$ for some $w$.

A word $w$ is said to be {\em proper} if there exist groups $G$ and $H$ such that $w[G] \neq  1$ and  $w[H] \neq H$, in fact, in this  case $1 \neq w[G \times H]  \neq G\times H$. 

Any element $w = w(x_1, \ldots , x_n) \in F(X)$ can be written as  a product 
$$w = x_1^{t_1}x_2^{t_2} ... x_n^{t_n} w',$$
 where $w' = w'(x_1, \ldots , x_n) \in [F,F]$. Since the exponents $t_1, t_2, ..., t_n$ depend only on the element $w$ the number $e(w) = gcd(t_1, t_2, ..., t_n)$ is well-defined (here we put $e(w) = 0$ if $t_1 = \ldots = t_n = 0$).  If $e(w) = 0$ then we refer to $w$ as a {\em commutator word}.  A non-trivial commutator word is obviously proper. If $e(w) > 0$ then  there exist integers $r_1, r_2, ..., r_n $ such that  $\sum_{i=1}^{n}r_it_i = e,$ so for an arbitrary group $G$ and an element $g \in G$ one has $w(g^{r_1}, g^{r_2}, ..., g^{r_n}) = g^e.$ In particular, $e(w) = 1$ implies that  $w[G] = G$ for every group $G,$ so $w$ is not proper.  If $e(w) > 1$ then  $w$ is  proper, which can be seen in an infinite cyclic group. In other words, a non-trivial  word  $w$ is proper if and only if $e(w) \neq 1$. 

Let $M$ be a monoid. For $L \subseteq M$ by  $L^{\ast }$  we denote the submonoid of $M$ generated by $L.$ The set
$Rat(M) \subseteq 2^M$, of all rational subsets of $M$, is defined as the smallest (with respect to inclusion) subset which contains all finite subsets of $M$ and closed under the following operations (here $L_1, L_2, L$ are subsets of $M$):

\begin{itemize}
\item{Union:} $(L_1,L_2) \to L_1 \cup L_2$. 
\item{Product:} $(L_1,L_2) \to L_1L_2 = \{ab \mid a\in L_1, b \in L_2\}$. ≈
\item{Submonoid generation:} $L \to L^{\ast }$.
\end{itemize}

It follows from the definition above  that every rational set $L \in Rat(M)$ in a monoid $M$ can be presented in a form

\begin{equation}
\label{eq:1}
L = \cup_{i=1}^{k} a_{i1}E_{i1}^{\ast } ... a_{it_i}E_{it_i}^{\ast }a_{i,t_i + 1}
\end{equation}

\noindent
where all coefficients $a_{ij}$ are in $M$ and each $E_{ij}$ is a rational subset of $M.$

We define  a {\em complexity function} $c: Rat(M) \to \mathbb{N}$  as follows. Put $c(L) = 0$ if and only if $L$ is finite.   Suppose now that  rational sets $L \in Rat(M)$ with $c(L) \leq n-1$ are defined. Then for a set   $L \in Rat(M)$ we put $c(L) = n$ if and only if $c(L) \not \leq n-1$, but either $L = L_1 \cup L_2,$ or $ L = L_1L_2,$ or $L = L_1^{\star },$ for some $L_i \in Rat(M)$ with $c(L_i) \leq n-1$, $i = 1, 2.$

It is easy to see that if $G$ is a group then for any element $g \in G$ and a set $L \in Rat(G)$ one has
\begin{equation} \label{eq:conj-rat}
c(g^{-1}Lg) = c(L).
\end{equation}

A finite $M-${\em automaton} is a tuple $A = (Q, \delta , q_0, F)$ where

\begin{itemize}
\item $Q$ is a finite set of {\em states},
\item $q_0 \in Q$ is the {\em initial} state, 
\item $F \subseteq Q$ is the set of {\em terminating} states,
\item $\delta \subseteq  Q \times M \times  Q$ is a  finite relation,  termed the {\em transition} relation.

\end{itemize}

One can view an automaton $A = (Q, \delta , q_0, F)$ as a directed $M$-labelled graph (possibly with multiple edges), with the set of vertices $Q$ and where two vertices $u, v$ are connected by an edge $u \to v$ labelled by $m \in M$ if and only if $(u,m,v) \in \delta$.  As usual, a path $p$ in $A$ from a vertex $u$ to a vertex $v$ is a sequence of edges 
$$ (u_0, m_1, u_1), (u_1, m_2, u_2), \ldots, (u_{k-1}, m_k, u_k)$$
such that $u = u_0$ and $v = u_k$. The {\em label} $\lambda(p)$  of $p$ is the product $m_1 \ldots m_k \in M$. 
A {\em successful}  path is a path from $q_0$ to a vertex in $F$. 

The subset $L(A) \subseteq M$ (the set of all elements in $M$  accepted by $A$) is defined as
$$L(A) =\{\lambda(p) \mid p \ \text{is a successful path in } A\}.$$

It was shown in \cite{Gil}  that for any monoid $M$  and any subset $L \subseteq M$ the following equivalence holds:

\[ L \in Rat(M) \Longleftrightarrow   L = L(A) \textrm{ for some automaton }  A \textrm{ over }  M.\]

Gilman showed in  \cite{Gil}  that any rational subset $L$ of a group $G$ generates in $G$ a finitely generated (and so rational) subgroup.   Hence if a verbal set $w[G]$ is rational then the verbal subgroup $w(G)$ is finitely generated.  In the case when $G$ is a free non-abelian group,  or more generally, a non-trivial free product $G = A \ast B,$ every  normal subgroup of infinite index is not finitely generated  (see \cite{MKS} and \cite{Bau}),  and so it is not rational. Hence the case  when the verbal subgroup $w(G)$ has  finite index in $G$ becomes the most interesting in our study.

\section{Free products and Rhemtulla's criterion}
\label{se:rhe1}

Let $G = A \ast B$ be a free product of  non-trivial  groups $A$ and $B$.   Each   element $u \not= 1$ of $G$ can be uniquely written in its reduced form $u = u_1u_2 ... u_m,$ where
$u_i \in A \cup B \setminus \{1\} \  (i = 1, 2, ..., m);$ and for every $i$ the elements  $u_i, u_{i+1}$  are from different groups $A$ and $B.$ The number $m$ is the syllable length of $u$, denoted by $|u|$.  
Put $supp(u) = \{u_1, \ldots,u_m\}.$   Furthermore, the reduced form of a non-trivial  element $u \in G$ can be uniquely written as 
$$
u = r_t^{-1}...r_1^{-1}v_1...v_kr_1...r_t , 
$$ 
where either  $k=1,$ or $k>1$ and  $v_1v_k \not= 1.$  We refer to  $v_1...v_k$ as the {\em core} of $u$ and denote it by ${\bar u}$.

As usual one can define a cyclically reduced form $u^{0}$ of an element $u \in G$.   Namely, let  the core ${\bar u}$ of $u$ is given in the reduced form ${\bar u} = u_1u_2 ... u_m$. Then if $u_1, u_m$ are from the different factors then $u^0 = u$. Otherwise, $u^0 = (u_2 \ldots u_{m-1}, (u_mu_1))$.

 In  \cite{Rhe}  A.H. Rhemtulla introduced  a useful technique of gap functions for $G =  A \ast B$.  To explain, suppose one of the groups, say $B$, has  an element  $b \in B$ such that $b \neq b^{-1}$.
 Let $u = u_1u_2 ... u_m$ be a non-trivial element in $ G$ given in its reduced form.
A subsequence $u_i, u_{i+1}, \ldots, u_{i+2k}$ of the reduced form of $u$ is called a $b-${\it gap} in $u$ of length $2k-1$ if $u_i = u_{i+2k} = b$ and $u_j \neq b$ for any $i < j < i+2k$.

For $k = 1, 2, \ldots$ denote by $\delta_{b,k}(u)$ the number of $b-$gaps of length $2k-1$ in $u$. 
For a positive integer $e$ put  $\gamma_{b,e} (u)$ to be the number of values $k$ such that  $\delta_{b,k}(u) \not= \delta_{b^{-1},k}(u)
(\bmod e).$ 

\medskip
{\bf Rhemtulla's criterion \cite{Rhe}.} {\it  Let $G = A \ast B$ and $b \in B$ as above. Then for any word $w(x_1, \ldots,x_n)$ with $e= e(w)  > 1$ the function $\gamma_{b,e} $ is bounded on the set $w[G].$ }

\section{Positive elements}
\label{se:pos}

A {\em sign} function on a group $G$ is a function $\rho : G \rightarrow \{-1, 1\}$ such that:

\begin{itemize}
\item $\rho (1) = 1$;
\item  for any $f,g \in G$ if  $\rho (f) = 1, \rho (g) = 1$ then $\rho (fg) = 1$.  
\end{itemize}
The set of positive elements $Pos(G)  = \{g \in G \mid \rho(g) = 1\}$ forms a submonoid in $G$. Conversely, a group $G$ with a distinguished submonoid $M \subseteq G$ admits a sign function $\rho_M$ such that  $\rho_M(g) = 1$ if and only if $g \in M$.   We refer to groups  with sign functions as {\em s-groups}. Elements from $M$ are called {\em positive}, all others - {\em negative}. The following examples are important in our context.

\begin{example}
\label{ex:cyclic}
Let $C = \langle a \rangle$ be an infinite cyclic group generated by $a$. Then $\rho(a^n) = 1$, if $n \geq 0$, and $\rho(a^n) = -1$ if $n < 0$ is a sign function. 
\end{example}

Generalizing the example above, we get the following
\begin{example} \label{ex:fg}
Let $G$ be a group with a generating  set $X.$ Then the submonoid $mon(X) = X^{\ast }$ generated by $X$ gives a sign function $\rho_X$ on $G$.   
\end{example}

\begin{example}
\label{ex:2}
Let $G = A \ast B$ be a free product of two non-trivial $s-$groups $A, B$. The  function $\rho:G \to \{-1,1\}$ such that $\rho (g) = 1 $ if and only if all factors in  the reduced form of $g \in A\ast B$ are positive, is a sign function on $G$, termed the {\em standard free product} sign function.
\end{example}
When applied to a free group $F$ with basis $X$ the examples above give the standard notion of a positive word in $F$. 

Notice, that there might be "zero divisors" in $G$ relative to $Pos(G)$, i.e., some elements $x, y \in G$, not both positive,  such that $xy \in Pos(G)$. For example, if $u \in G$ is negative then $u u^{-1}$ is positive.   To separate the natural cases like $uu^{-1}$  above (which is easy to deal with) from the harder ones we introduce the following notion. 
We  say that a sign function $\rho$ on $G$ is {\it reduced}  if   it has the following property:

\begin{itemize}

\item 
for any two subsets of elements $S, T$ of $G$ if $ST \subseteq Pos(G)$ then  here exists an element $u \in G$ such that  $Su^{-1} \subseteq Pos(G)$ and $uT \subseteq Pos(G).$

\end{itemize}

In particular, if $uT \subseteq Pos(G)$ then there is an element $u_0$ such that $uu_0^{-1} \in Pos(G)$  and  $u_0T \subseteq Pos(G)$.

Notice also, that if $\rho$ is reduced then for a fixed set $S$ there is a single $u$ that works for all subsets $T$ as above; a similar claim holds for a fixed set $T$.

We  say that a sign function $\rho$ on $G$ is {\em strongly reduced}  if   it is reduced and has the following property:

\begin{itemize}

\item 
any product of two negative elements is negative.

\end{itemize}

\begin{remark}
\label{ex:4.4}
 The sign function on the infinite cyclic group from Example \ref{ex:cyclic} is  strongly reduced.
\end{remark}

\begin{lemma}
\label{le:4.5}
 The standard sign function (see Example \ref{ex:2}) on a free product $G = A \ast B$ of two non-trivial $s-$groups with reduced sign functions is reduced.  In particular, the standard sign function on a free group is reduced. 
\end{lemma}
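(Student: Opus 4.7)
The strategy is to construct a single ``junction element'' $u$ at which the cancellations in all products $st$ for $s \in S, t \in T$ can be localized. The proof proceeds via the normal form for elements of $G = A \ast B$ as reduced alternating sequences of syllables from the factors, together with a careful exploitation of the maximal-cancellation pair.

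First I would select a reference pair $(s^*, t^*) \in S \times T$ maximizing the syllable-level cancellation (plus possible boundary merge) in the reduced form of $s^* t^*$. Writing this as $s^* = \tilde s \cdot c$ and $t^* = c^{-1} \cdot \tilde t$, where $c$ is the element absorbed at the junction, the hypothesis $s^* t^* \in \mathrm{Pos}(G)$ forces every surviving syllable of $\tilde s$ and $\tilde t$ to be positive in its factor.

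The key step is a universality argument. For every $s \in S$, positivity of $s \cdot t^*$ requires the cancellation to absorb every negative syllable of the prefix $c^{-1}$ of $t^*$ (otherwise such a syllable would survive negatively in the reduced form of $st^*$), while the maximality of the reference pair forbids the cancellation from extending further into the positive portion $\tilde t$. Hence $s$ ends with the element $c$, i.e., $s = p_s \cdot c$ with $p_s \in \mathrm{Pos}(G)$. The symmetric argument applied to $s^* \cdot t$ gives $t = c^{-1} \cdot q_t$ with $q_t \in \mathrm{Pos}(G)$. Setting $u := c$ then yields $s u^{-1} = p_s \in \mathrm{Pos}(G)$ and $u t = q_t \in \mathrm{Pos}(G)$, as required. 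Specializing to the free group $F$ (a free product of infinite cyclic factors, each equipped with the sign function of Example~\ref{ex:cyclic}, which by Remark~\ref{ex:4.4} is reduced) then yields that standard sign function on $F$ is reduced.

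The main obstacle is the possible merge at the junction: when in the reduction of $s^* t^*$ the last syllable of $\tilde s$ and the first syllable of $\tilde t$ lie in a common factor (say $A$) and combine via multiplication in $A$ into a single positive syllable, the decomposition above must be adjusted because the ``boundary'' syllables of $s$ and $t$ across the family $S \times T$ need no longer be individually positive. To handle this, one collects the family of boundary factor-elements arising from these syllables, whose pairwise products lie in $\mathrm{Pos}(A)$, and invokes the reducedness hypothesis on $\rho_A$ to produce a single element $v \in A$ that uniformly splits all these merges. Incorporating $v$ into $c$ produces the final $u$. This is precisely where the hypothesis that the sign functions on the factors $A$ and $B$ are reduced is essential.
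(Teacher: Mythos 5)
Your strategy is in the same spirit as the paper's (localize the problem at a junction element and then repair the boundary using reducedness of a factor; the paper anchors at the deepest ``last negative syllable'' over $T$ versus $S$, you anchor at a maximal-cancellation pair), but as written two key steps are not established. First, you simply ``select a reference pair $(s^*,t^*)$ maximizing the syllable-level cancellation'': when $S$ and $T$ are infinite the existence of such a maximum is not obvious, and proving it is essentially the boundedness statement that the paper establishes first (the negative syllables of elements of $S$ lie within boundedly many syllables of the right end, and those of elements of $T$ within boundedly many syllables of the left end, shown by multiplying against one fixed element of the other set); your proposal contains no such argument, so the anchor is not yet well defined. Second, the universality claim ``hence $s$ ends with the element $c$, i.e.\ $s=p_s c$ with $p_s\in Pos(G)$'' does not follow from what you argue: maximality only bounds the cancellation in $st^*$ from above by $|c|$, it does not force it to reach $|c|$. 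For instance, in $\langle x_1\rangle\ast\langle x_2\rangle$ with positive words, take $S=\{x_2,\ x_2x_1^{-1}\}$ and $T=\{x_1x_2\}$: then $ST\subseteq Pos(G)$, the maximizing pair gives $c=x_1^{-1}$, yet $x_2\in S$ does not end with $c$. What you actually need (and what is true) is only that $sc^{-1}$ is positive except possibly in its final syllable; to get that one must add the observation that the uncancelled portion of $c^{-1}$ consists of syllables which survive, hence are positive, in the reduced form of $st^*$ --- an argument absent from your write-up, and similarly for the products $ct$.

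The boundary repair is also incomplete. The merge obstacle is not a property of the reference pair alone: it arises for every pair $(s,t^*)$ or $(s^*,t)$ whose cancellation attains the maximum, whether or not $s^*t^*$ itself merges, so conditioning the ``main obstacle'' on a merge occurring in $s^*t^*$ does not cover all the problematic boundary syllables. More importantly, you must produce one element $u$ that works for both sides simultaneously, so you have to show that the possibly-negative boundary elements coming from the $S$-side and from the $T$-side lie in a common factor (or occur on one side only); your sketch tacitly assumes they all live in a single factor $A$. The paper arranges this automatically by comparing the two negative depths $i_0\le j_0$ and anchoring at the deeper one, which forces every $u\in S$ to end in a $B$-syllable followed by $c^{-1}$ and reduces everything to the single inclusion $S_BT_B\subseteq Pos(B)$, to which reducedness of $B$ is applied once, yielding the junction element $bc^{-1}$. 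Your plan looks repairable along these lines, but the existence of the maximizing pair, the universality step, and the two-sided compatibility of the repair are genuine gaps as written.
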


\begin{proof}
Let $S, T$ be non-empty subsets of $G$ such that $ST \subseteq Pos(G)$.  For an element $u \in S \setminus Pos(G)$ written in its reduced form   $u = u_1u_2 \ldots u_i \ldots u_k$ denote by  $i = i(u)$ such an index that  $u_{i}$ is negative, but all the factors $u_1, \ldots, u_{i-1}$ are positive. Similarly, for an element  $v = v_1v_2 ... v_j ... v_l \in T \setminus Pos(G)$ written in its reduced form let $j = j(v)$ be the index 
such that $v_{j}$ is negative but all the factors $v_{j+1}, \ldots, v_l$ are positive. Notice, that the sets $I = I(S) = \{|u| - i(u) +1 \mid u \in S \setminus Pos(G)   \}$ and $J = J(T) = \{ j(v) \mid v \in T \setminus Pos(G)\}$ are bounded even if the sets $S, T$ are infinite. Indeed, if, say,  $J$ is unbounded then for a given $u \in S$ there exists $v^\prime \in T$ with $|u| < j(v^\prime)$, in which case $uv^\prime$ is not positive, - contradicting the hypotheses of the lemma.  Put $max(\emptyset ) = 0,$ and $i_0 = max(I), j_0 = max(J)$. The case $S, T \subseteq Pos(G)$ when $I = J = \emptyset$ and $i_0 = j_0 = 0$ is obvious.  Assume that  $i_0  \leq j_0 $ (the other case can be treated similarly). Let  $\tilde{v} \in T \setminus Pos(G)$ be an element with  $j_0 = j(v)$.  Write $\tilde{v}$ in the reduced form $\tilde{v} = v_1 \ldots v_{j_0 -1}v_{j_0} \ldots v_l$, and assume that $v_{j_0}$ lies, say, in $B$. Denote $c = v_1 \ldots v_{j_0 -1}$.
Then for every element
$u \in S$ the factor $v_{j_0 -1}$ cancels out in the reduced form of $uv$ (otherwise a negative  factor $v_{j_0}$ occurs in the reduced form of $uv$). Hence the  reduced form of each $u \in S$ is of the type $u = u_1 \ldots u_{r(u)}c^{-1}$ for a suitable index $r(u)$ and $u_{r(u)} \in B$. Consider now two cases.

Case 1. $i_0 = j_0$. Let  $\tilde{u} \in S \setminus Pos(G)$ be an element with  $i_0 = i(u)$. In this case $\tilde{u} = \tilde{u}_1 \ldots \tilde{u}_{r(\tilde{u})}c^{-1}$, where  $\tilde{u}_{r(\tilde{u})} \in B$ is negative. The argument above shows that every element $v \in T$ can be written in the reduced form $v = cv_{j_0} \ldots v_m$, where $v_{j_0} \in B$. It follows that for any $u \in S, v \in T$ one has  $uv = u_1 \ldots (u_{r(u)}v_{j_0}) \ldots v_m $, where all the  factors, including  $(u_{r(u)}v_{j_0})$, are positive.  Since the sign function in $B$ is reduced there is an element $b \in B$ such that the elements $u_{r(u)}b^{-1}, bv_{j_0}$ are positive in $B$ for any $u \in S, v \in T$. Hence, $Scb^{-1} \subseteq Pos(G)$ and $bc^{-1}T \subseteq Pos(G)$, as required.

Case 2. $i_0 < j_0$. In this case the  reduced form of each $u \in S$ is of the type $u = u_1 \ldots u_{r(u)}c^{-1}$,  $u_{r(u)} \in B$, and furthermore, $u_{r(u)} \in Pos(B)$. Observe that for every $v \in T$ since $uv \in Pos(G)$ then in the product $c^{-1}v$ either $c^{-1}$ cancels out completely or $c^{-1}v \in Pos(G)$. In the former case $v = cv^\prime$ and the reduced form of $v^\prime$ is of the type $v^\prime_1 \ldots v^\prime_m$; in this event put  $y(v) = v^\prime_1$.  In the latter case, 
put $y(v) = 1$. By construction the  sets $S_B = \{u_{r(u)} \mid u \in S\} \subseteq B$ and $T = \{y(v) \mid  v \in T\} \subseteq B$  are such that $S_BT_B \subseteq Pos(B)$. Hence there is $b \in B$ with $S_Bb^{-1} \subseteq Pos(B), bT_B \subseteq Pos(B)$.   It follows that  $Scb^{-1} \in Post(G)$ and $bc^{-1}T \subseteq Pos(G)$ as claimed.

\end{proof}

\begin{lemma}
\label{le:4.6}
Let $A, B$ be  non-trivial groups with strongly reduced sign functions and  $G = A \ast B$ equipped with the standard free product sign function. If $L \in Rat(G)$ and  for some elements $u, v \in G$   $uLv \subseteq Pos(G)$ then $uLv \in Rat(Pos(G))$. 
\end{lemma}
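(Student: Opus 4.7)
The plan is to prove the lemma by induction on the complexity $c(L)$, using Lemma~\ref{le:4.5} (the reduced property of the standard sign function on $G$) to propagate the sandwich condition $uLv \subseteq Pos(G)$ through each rational-set operation. The base case $c(L) = 0$ is immediate: $uLv$ is finite, hence in $Rat(Pos(G))$. For the inductive step we write $L$ via one operation on sets of strictly smaller complexity.

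In the union case $L = L_1 \cup L_2$ we split $uLv = uL_1v \cup uL_2v$ and apply the inductive hypothesis to each piece. In the concatenation case $L = L_1L_2$, the inclusion $(uL_1)(L_2v) \subseteq Pos(G)$ together with reducedness produces $c \in G$ with both $uL_1c^{-1} \subseteq Pos(G)$ and $cL_2v \subseteq Pos(G)$; induction gives each factor in $Rat(Pos(G))$, hence so is their product $uLv$.

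The Kleene star case $L = L_1^*$ is the heart of the argument. Applying the reduced property to $(uL_1^*)(L_1^*v) = uL_1^*v \subseteq Pos(G)$ produces $c \in G$ with $uL_1^*c^{-1} \subseteq Pos(G)$ and $cL_1^*v \subseteq Pos(G)$; taking the empty iteration in each gives $uc^{-1}, cv \in Pos(G)$. The natural factorization
\[
uL_1^*v \;=\; (uc^{-1})\,(cL_1c^{-1})^{\ast}\,(cv)
\]
will exhibit $uL_1^*v \in Rat(Pos(G))$ once we show $cL_1c^{-1} \subseteq Pos(G)$: the inductive hypothesis, applied with parameters $c, c^{-1}$ to $L_1$ (which has complexity strictly less than $c(L)$), then yields $cL_1c^{-1} \in Rat(Pos(G))$.

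The main obstacle is establishing $cL_1c^{-1} \subseteq Pos(G)$. For a fixed $l \in L_1$, set $X = uc^{-1}$, $Y = clc^{-1}$, $Z = cv$; then $X, Z \in Pos(G)$ and $XY^n, Y^nZ \in Pos(G)$ for every $n \geq 0$. Decompose $Y$ into its cyclically reduced form $Y = h^{-1}Y'h$. Since the reductions at the two interfaces in $XY^n = (Xh^{-1})(Y')^nh$ can eat away only a bounded number of syllables (depending on $|X| + |h|$, not on $n$), for $n$ large enough the interior copies of $Y'$ survive intact in the reduced form of $XY^n$; positivity of $XY^n$ then forces every syllable of $Y'$ to be positive. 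A parallel analysis of the right end of $XY^n$ and the left end of $Y^nZ$ forces each syllable of $h$ and of $h^{-1}$ to be positive as well. The strongly reduced hypothesis on $A$ and $B$ enters precisely at the boundary mergers within a single factor: it prevents a negative syllable from being masked as positive by multiplication with neighbouring syllables, so the surviving syllables really do carry the sign they had originally. Piecing the three conclusions together, every syllable of $Y = h^{-1}Y'h$ in its reduced form is positive, i.e. $clc^{-1} \in Pos(G)$, which closes the induction.
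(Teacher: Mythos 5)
Your induction set-up, the union and product cases, and the factorization $uL_1^{\ast}v=(uc^{-1})(cL_1c^{-1})^{\ast}(cv)$ are all fine, and your power argument does correctly force positivity of every syllable of $Y=clc^{-1}$ whenever the core of $Y$ has syllable length at least $2$ (for large $n$ the cancellation coming from $Xh^{-1}$ and from $hZ$ is bounded, so whole copies of the core and the whole conjugating tails survive as genuine syllables of $XY^n$ and of $Y^nZ$). The genuine gap is the remaining case, which is exactly where the difficulty of the lemma is concentrated: when $Y=h^{-1}y_1h$ is conjugate into a single factor, say $y_1\in B$. Then in the reduced forms of $XY^n$ and $Y^nZ$ the single core syllable $y_1^{\,n}$ merges with an adjacent syllable $p$ of $Xh^{-1}$, respectively $q$ of $hZ$, and positivity of these products only yields that the merged syllables $p\,y_1^{\,n}$ and $y_1^{\,n}q$ are positive; it does not force $y_1$ itself to be positive. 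Your justification at this point --- that the strongly reduced hypothesis ``prevents a negative syllable from being masked as positive by multiplication with neighbouring syllables'' --- misreads the definition: strongly reduced only adds that a product of two \emph{negative} elements is negative, and a positive neighbour can perfectly well mask a negative one (in $\mathbb{Z}$ with the standard sign, $a^{5}\cdot a^{-3}=a^{2}$ is positive). So the pivotal claim $cL_1c^{-1}\subseteq Pos(G)$ is unproved; what you would actually need is that a group with a strongly reduced sign function admits no $p,q\in Pos(B)$ and negative $y$ with $p\,y^{n},\,y^{n}q\in Pos(B)$ for all $n$, and neither your sketch nor anything stated in the paper delivers this in the generality of the lemma.

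This is precisely where the paper's proof does its real work: it does not expect the separator supplied by reducedness to conjugate the smaller set into $Pos(G)$. It writes $uL_1^{\ast}v=wL_2^{\ast}$ with $L_2=v^{-1}L_1v$, and when $L_2\not\subseteq Pos(G)$ it picks a negative $l\in L_2$ whose last negative syllable has maximal index, proves (Claims 1 and 2) that this syllable lies in the conjugating prefix of $l$ and that every element of $L_2$ ends in the common tail $l_{i-1}^{-1}\cdots l_1^{-1}$ preceded by a $B$-syllable $b(d)$ with $b(d)b$ positive, and only then invokes the strongly reduced property of the factor $B$ to produce a \emph{positive} element $b_0$ and hence a positive conjugator $r=b_0l_{i-1}^{-1}\cdots l_1^{-1}$ with $rL_2r^{-1}\subseteq Pos(G)$, to which induction applies via $wL_2^{\ast}=w'(rL_2r^{-1})^{\ast}r$. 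To salvage your slicker route you would have to prove the two-sided absorption statement above for strongly reduced sign functions (it does hold for infinite cyclic factors, but that is not the stated hypothesis), or else construct the conjugator from the structure of the negative elements of $L_2$ as the paper does; as written, the Kleene star case has a genuine gap.
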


\begin{proof}
 We use induction on $c(L)$. If  $c(L) = 0$ then  $L$ is finite and the claim is obvious. Now consider the following cases.

Case 1) If $L = L_1 \cup L_2$ and  $c(L_i) < c(L)$ for $i = 1,2$ the result follows by induction.  

Case 2) Suppose $L = L_1L_2$ and  $c(L_i) < c(L)$ for $ i = 1,2$. Then $uLv$ is a product of two rational sets $uL_1$ and $L_2v.$ Since the standard free product sign function on $A \ast B$ is reduced there is an element $w \in G$ for which $uL_1w^{-1}, wL_2v \subseteq Pos(G).$ Then $uL_1w^{-1}, wL_2v \in Rat(Pos(G))$ by induction, hence  $uLv = uL_1w^{-1} \cdot wL_2v \in Rat(Pos(G))$. 

Case 3) Suppose $L = L_1^{\ast },$ where $c(L_1) < c(L).$ Then $uLv = uL_1^{\ast }v = uv(v^{-1}L_1v)^{\ast },$  and $w = uv \in  Pos(G)$ since $1 \in L_1^{\ast }$. Denote  $L_2 = v^{-1}L_1v$,  so $uLv  = wL_2^\ast$ and   $c(L_2) = c(v^{-1}L_1v) = c(L_1) < c(L)$.  

If $L_2 \subseteq Pos(G)$ then the  result follows by  induction. Otherwise, there is an element $l \in L_2 \smallsetminus Pos(G)$. If $l = l_1 ... l_i l_{i+1} ... l_t$ is the reduced form of $l$  then there exists $i$ such that  $l_i$ is  negative and all the factors $l_{i+1}, \ldots, l_t$  are positive.  Suppose that the reduce form of $l$ is written in the form 
$$l = r_t^{-1}...r_1^{-1}u_1...u_kr_1...r_t ,$$  
where either  $k=1,$ or $k>1$ and  $u_1u_k \not= 1.$  

Claim 1. In the notation above the factor $l_i$ of $l$ is among  the first $t+1$ factors $r_t^{-1}, \ldots, r_1^{-1}, u_1$, i.e., $i \leq t+1$.

Indeed, the reduced form of every element $d \in wL_2^{\ast}$ must have the product $l_{i-1}^{-1}...l_1^{-1}$ at the end, otherwise the element  $dl \in wL_2^{\ast}$ would not be  positive. 

If $i > |l|/2$ then for every $d \in wL_2^{\ast}$ more then half of $l$ cancels out in $dl$, so $|dl| < |d|$. This implies that $wL_2^{\ast}$ does not have an element of minimal length, i.e.,  $wL_2^{\ast}  = \emptyset$, which contradicts the fact that  $w \in wL_2^{\ast}$. 

Assume now that $i \leq |l|/2$, but $i > t+1$. It follows that $k \geq 2$.  In this case for any   natural number $p$ one has $l^p \in L_2^\ast$  and the rightmost occurrence of  the negative factor $l_i$ of $l$ does not cancel in $l^p$ (since it is in the core $\bar l$ of $l$).  Therefore,  for sufficiently large $p$ the rightmost occurrence of the factor $l_i$ does not cancel in $wl^p$, so the element $wl^p$ is negative, which  contradicts the condition $wL_2^\ast \subseteq Pos(G)$. Hence $i \leq t+1$ and the claimed follows.

Let's take $l  \in L_2 \smallsetminus Pos(G)$ such that $i = i(l)$ be the maximal possible such index   among  all elements $l$ in $L_2 \smallsetminus Pos(G)$ (such $i$  exists since $wL_2^\ast \subseteq Pos(G)$). We can also assume  that $l_i = b \in B.$  It follows that the reduced form of $w$ is equal to $w'b(w)l_{i-1}^{-1} ... l_1^{-1}$, where $b(w) \in B$ and  $b(w)b \in Pos(B)$.  

Claim 2. The reduced form of any element $d \in L_2$  can be written as  
$d = d'b(d)l_{i-1}^{-1} ... l_1^{-1},$ where $b(d) \in B$ and $b(d)b \in Pos(B).$  

  Indeed, suppose   $m \in L_2$  does not have  $l_{i-1}^{-1}...l_1^{-1}$ at the end.   Then  $m = m_1...m_s l_q^{-1}...l_1^{-1},$ where $0 \leq q < i-1$ (we assume $m = m_1...m_s$ for $q = 0$) and  $m_sl_{q+1} \not= 1$ (notice that $m_s$ and $l_{q+1}$ are in the same factor).  Since $wml$ is positive the negative factor  $l_i$ of $l$ cancels out in $wml$, so $m_1...m_{s-1}(m_sl_{q+1})  $ must cancel out in $wm$.  The element $w$ ends on $l_{i-1}^{-1}...l_1^{-1}.$  If $s >q$ then $m = l_1...l_qm_{q+1} ... m_s l_{q}^{-1}...l_1^{-1},$  and  $m_{q+1} = l_{q+1}$. On the other hand  $m_sl_{q+1} \neq 1$, which shows that $m_{q+1} ... m_s$ is the core of $m$.  If $s \neq q+1$ then the length of the core of $m$ is greater than 1, so for sufficiently large integer $p$  the element $wm^pl \in wL_2^\ast$  is negative - contradiction. If $s = q+1$  then  $m = l_1 ... l_q l_{q+1}l_q^{-1}...l_1^{-1}$ and $wml = w'l_{q+1}^{-1}l_{q+1}l_{q+1}l_{q+2} \ldots l_t$, whose reduced form is $w'l_{q+1}l_{q+2} \ldots l_t$, so it contains $l_i$ - contradiction.  The case $s\leq q$ can be done similarly. This proves the claim. 
  
  In the notation above, since the sign function on $B$ is strongly reduced there is a positive element $b_0 \in B$ for which $b_0b,$ $b(w)b_0^{-1}$ and $b(d)b_0^{-1}$ are positive for all $d \in L_2.$ Denote $r = b_0l_{i-1}^{-1} ... l_1^{-1}.$ Notice, that $r$ is positive.   It follows from Claim 2 that every element $d \in L_2$  can be written in the reduced form as  
$d = d'(b(d)b_0^{-1})r.$ Let 
$$
L_3 = \{d'(b(d)b_0^{-1}) \mid d \in L_2\}
$$
  
  Claim 3. The language $rL_3$ is positive.
  
 Indeed, it follows from the argument above.

 Notice that $rL_3 = rL_2r^{-1}$ so  $rL_3$ is rational and  $c(rL_3) = c(L_2) < c(L)$. Hence by induction $rL_3 \in Rat(Pos(G)).$ Now  

\begin{equation}
 \label{eq:2}
wL_2^{\ast } = w'r\{L_2\}^{\ast } = w'\{rL_3\}^{\ast } r,
\end{equation}
  
\noindent
where all factors in product on the right are positive.   Hence $uLv  = wL_2^\ast  \in Rat(Pos(G))$, which proves the lemma.

\end{proof}

\begin{lemma}
\label{le:4.7}
Let $A, B$ be  non-trivial groups with  strongly reduced sign functions and  $G = A \ast B$ equipped with the standard sign function. If $L \in Rat(G)$ and  $L \subseteq Pos(G)$ then $L \in Rat(Pos(G))$.   
\end{lemma}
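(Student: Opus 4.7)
The plan is simply to invoke Lemma \ref{le:4.6} with trivial conjugators. By the first axiom in the definition of a sign function, $\rho(1) = 1$, so the identity element $1$ of $G$ is positive. Setting $u = v = 1$, the hypothesis $L \subseteq Pos(G)$ becomes $uLv = 1 \cdot L \cdot 1 \subseteq Pos(G)$, which is exactly the premise of Lemma \ref{le:4.6}. Its conclusion then reads $L = uLv \in Rat(Pos(G))$, which is precisely what Lemma \ref{le:4.7} asserts.

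There is no real obstacle to overcome here, because all of the genuine work has already been absorbed into the proof of Lemma \ref{le:4.6}: the induction on complexity $c(L)$, the three cases (union, product, Kleene star), the use of reducedness of the standard sign function on $A \ast B$ to split a conjugator across a product, and the use of strong reducedness in the syllable group $B$ to straighten out the Kleene star by rewriting $w L_2^{\ast}$ in the form $w'(rL_3)^{\ast} r$. Keeping the conjugators $u,v$ in play throughout Lemma \ref{le:4.6} was exactly what made the Kleene-star step close inductively, since after one expansion the resulting expression again fits the ``$uLv \subseteq Pos(G)$'' template. Lemma \ref{le:4.7} is therefore best viewed as the clean special case of Lemma \ref{le:4.6} recorded for convenience of citation in Section~5.
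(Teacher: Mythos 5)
Your argument is correct: Lemma \ref{le:4.6} is stated for arbitrary elements $u,v\in G$, so the instantiation $u=v=1$ is legitimate, and it yields $L=1\cdot L\cdot 1\in Rat(Pos(G))$ at once; there is no circularity, since the proof of Lemma \ref{le:4.6} nowhere uses Lemma \ref{le:4.7}. The paper, however, does not take this one-line route: it proves Lemma \ref{le:4.7} by a fresh induction on the complexity $c(L)$, handling the union and Kleene-star cases directly (there positivity of $L$ is inherited by the constituent sets, so induction applies), and invoking Lemma \ref{le:4.5} together with Lemma \ref{le:4.6} only in the product case $L=L_1L_2$, where $L\subseteq Pos(G)$ does not force $L_1,L_2\subseteq Pos(G)$ and one needs the reducedness of the sign function to produce an element $u$ with $L_1u,\ u^{-1}L_2\subseteq Pos(G)$. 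Your specialization is shorter and makes clear that Lemma \ref{le:4.7} is just the degenerate case of Lemma \ref{le:4.6}; note that under $u=v=1$ the delicate subcase of the star step in Lemma \ref{le:4.6} (an element of $L_2$ failing to be positive) cannot even occur, though this is irrelevant when the lemma is used as a black box. What the paper's redundant-looking induction buys is only expository: it isolates Lemma \ref{le:4.6} as the technical workhorse for sets that become positive after translation, and records Lemma \ref{le:4.7} with a proof that exhibits where reducedness (Lemma \ref{le:4.5}) enters; logically your derivation is equally valid.
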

\begin{proof}
 We use induction on  complexity $c(L)$ of $L$.  If $c(L) = 0$ then $L$ is finite and the claim is obvious.  
Consider the following cases. 

Case 1) If $L = L_1 \cup L_2$ or $L = L_1^\ast$ with $c(L_i) < c(L), i = 1,2,$ then $L_1, L_2 \subseteq Pos(G)$ and the result follows by induction.  

Case 2) Suppose $L = L_1L_2$.  By Lemma \ref{le:4.5} the standard free product sign function on $G$ is 
reduced. Therefore there is $u \in G$ such that $L_1u, u^{-1}L_2 \subseteq Pos(G)$. By Lemma \ref{le:4.6} 
$L_1u, u^{-1}L_2 \subseteq Rat(Pos(G))$.  Hence $L = (L_1u)(u^{-1})L_2 \subseteq Rat(Pos(G))$, as claimed.

   \end{proof}

\begin{lemma}
\label{le:4.8}
Let $A, B$ be two non-trivial groups with  strongly reduced sign functions and such that the sets $Rat(A), Rat(B)$ are closed under intersections and complements (form Boolean algebras) and  $G = A \ast B$. If the submonoid  $Pos(G)$ relative to the standard sign function on $G$ is rational then for any  $\bar{L} \in Rat(G)$ the intersection $L = \bar{L} \cap Pos(G)$  is rational in the monoid $Pos(G).$
\end{lemma}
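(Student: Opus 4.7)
The strategy is to reduce the lemma to Lemma~\ref{le:4.7} by first proving that $L = \bar{L} \cap Pos(G)$ is rational in $G$. Since $L \subseteq Pos(G)$ by construction, Lemma~\ref{le:4.7} will then immediately yield $L \in Rat(Pos(G))$. Hence the real content is in showing $L \in Rat(G)$, i.e.\ that $Rat(G)$ contains the intersection of the two given rational sets $\bar{L}$ and $Pos(G)$.

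To produce $L$ as a rational subset of $G$, I would fix finite $G$-automata $\bar{\mathcal A}$ accepting $\bar{L}$ and $\mathcal P$ accepting $Pos(G)$, with edge labels in $A \cup B$ (subdividing edges as needed), and then intersect them via a product construction. The delicate point is that a naive product of $G$-automata computes the intersection of the \emph{syntactic} languages over the alphabet $A \cup B$, not the set-theoretic intersection in $G$, because a single element of $G$ may be read along different factorizations in the two automata. The remedy is to normalize both automata so that each accepted element is read only along its unique $A \ast B$-normal form. Concretely, I would convert $\bar{\mathcal A}$ and $\mathcal P$ into equivalent \emph{alternating} automata in which every state carries a flag from $\{A,B\}$ recording the factor of the most recent label, transitions must switch the flag, and edge labels are rational subsets of $A \setminus \{1\}$ or $B \setminus \{1\}$. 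This normalization is precisely where the Boolean algebra hypothesis on $Rat(A)$ and $Rat(B)$ is used: removing the identity from a rational subset of a factor requires closure under complement, and compressing a run of same-factor edges into a single edge requires closure under product and union in $Rat(A)$ and $Rat(B)$.

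Once both automata are in alternating form, the usual product construction applies cleanly: states are pairs with matching factor-flags, edges are pairs of edges carrying the \emph{intersection} of the two factor-labels (again rational, by the Boolean algebra hypothesis), and accepting pairs of paths correspond bijectively to normal forms of elements lying in $\bar{L} \cap Pos(G)$. This yields a finite $G$-automaton for $L$, so $L \in Rat(G)$, and Lemma~\ref{le:4.7} then concludes $L \in Rat(Pos(G))$. The main technical obstacle is the normalization step: producing the alternating form of a $G$-automaton while preserving the accepted subset of $G$, and verifying that on alternating automata the product construction correctly realizes the set-theoretic intersection in $G$ rather than just the intersection of the underlying path languages. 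Everything else is a direct consequence once the normalization is in place.
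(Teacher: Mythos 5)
Your overall reduction is exactly the paper's: show $L=\bar L\cap Pos(G)$ lies in $Rat(G)$ and then invoke Lemma~\ref{le:4.7} (note $L\subseteq Pos(G)$). But the paper disposes of the first step in one line by citing Bazhenova's theorem \cite{Baz1}, which says that the class of groups whose rational subsets form a Boolean algebra is closed under free products; hence $Rat(G)$ is closed under intersection, and since $Pos(G)\in Rat(G)$ by hypothesis, $L\in Rat(G)$ immediately. You instead try to prove the needed intersection closure from scratch by an automaton product construction, which, if carried out, would amount to reproving the relevant part of \cite{Baz1}.

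The problem is that your sketch leaves unproved exactly the step where all the difficulty sits. Normalizing a $G$-automaton to an ``alternating'' one is not achieved by deleting the identity from factor-labels and compressing runs of same-factor edges: a path with labels $a_1,\,b,\,b^{-1},\,a_2$ (with $a_i\in A$, $b\in B$) represents the element $a_1a_2$, and after compressing the $B$-run into a single edge whose label set contains $1$, discarding $1$ changes the accepted subset of $G$. To preserve the accepted set you must add transitions (or identify states) whenever $1$ lies in a compressed factor-label, then recompress the newly adjacent same-factor runs, and iterate this saturation to a fixed point, with an argument that the process terminates and that in the resulting automaton every accepted element is read only along its normal form. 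This Benois-type saturation for free products is precisely the content of the result the paper cites, and you acknowledge it as ``the main technical obstacle'' without supplying it; as written, the proposal therefore has a genuine gap at its crux, even though the product construction and the use of the Boolean-algebra hypothesis on $Rat(A)$, $Rat(B)$ (for intersecting edge labels) are fine once normalization is granted. The quickest repair is simply to quote \cite{Baz1} as the paper does.
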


\begin{proof} Observe, that under the premises of the theorem  $Rat(G)$ is a Boolean algebra by G.A. Bazhenova's result \cite{Baz1}, which states that class of groups with Boolean algebras of rational subsets is closed under free products.  Hence $L$ is rational in $G.$ Then $L \in Rat(Pos(G))$ by Lemma \ref{le:4.7}.

\end{proof}

\begin{corollary}
\label{co:4.9}
Let  $F_2 = F(X_2)$ be a free non-abelian group of  rank $2$ with   basis $X_2 = \{x_1, x_2\}.$
Denote by $X_2^{\ast } $ the free submonoid of $F_2$ generated by $X_2.$ 
If  $\bar{L} \in Rat(F_2)$ then $L = \bar{L} \cap X_2^{\ast } \in Rat(X_2^{\ast })$. 
\end{corollary}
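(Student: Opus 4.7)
The plan is to realize this as a direct application of Lemma \ref{le:4.8}. Write $F_2$ as the free product $\langle x_1 \rangle \ast \langle x_2 \rangle$ of two infinite cyclic groups. Each factor $\langle x_i \rangle \cong \mathbb{Z}$ carries the sign function of Example \ref{ex:cyclic} declaring $x_i^n$ positive iff $n \geq 0$; by Remark \ref{ex:4.4} this sign function is strongly reduced. Under the standard free product sign function on $F_2$, an element is positive iff every syllable in its reduced form is a positive power of $x_1$ or $x_2$. Since each syllable is a single generator $x_i^{n}$ with $n\geq 1$, the set $Pos(F_2)$ coincides exactly with the free submonoid $X_2^{\ast}$ generated by $\{x_1,x_2\}$.

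Next I verify the two remaining hypotheses of Lemma \ref{le:4.8}. First, $Rat(\mathbb{Z})$ is a Boolean algebra: rational subsets of $\mathbb{Z}$ are precisely the eventually periodic (semilinear) subsets, a class well-known to be closed under intersection and complementation. Second, $Pos(F_2)=X_2^{\ast}=\{x_1,x_2\}^{\ast}$ is obviously rational in $F_2$, being the Kleene star of a two-element set.

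With these hypotheses in hand, Lemma \ref{le:4.8} applies verbatim: for any $\bar L \in Rat(F_2)$, the intersection
\[
L = \bar L \cap Pos(F_2) = \bar L \cap X_2^{\ast}
\]
is rational as a subset of the monoid $Pos(F_2) = X_2^{\ast}$, which is exactly the conclusion. No technical obstacle arises here — all the work has been done in Lemmas \ref{le:4.5}--\ref{le:4.8}; the only thing to check is that the ambient setup (two infinite cyclic factors with the natural sign function) satisfies the hypotheses, and each check is immediate.
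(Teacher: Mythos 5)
Your proposal is correct and follows exactly the paper's route: the paper's own proof of Corollary \ref{co:4.9} is simply ``Follows from Lemma \ref{le:4.8}'', with the hypotheses left implicit. Your explicit checks --- that each cyclic factor carries the strongly reduced sign function of Example \ref{ex:cyclic}, that $Rat(\mathbb{Z})$ is a Boolean algebra, and that $Pos(F_2)=X_2^{\ast}$ is rational --- are all accurate and merely spell out what the paper takes for granted.
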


\begin{proof} Follows from Lemma \ref{le:4.8}.

 \end{proof} 

\section{Free groups and free products}
\label{se:fre}

Let   $F = F(X)$ be a free non-abelian group with  basis $X = \{x_1, x_2, ... \}.$
Put $X_2 = \{x_1, x_2\}$ and consider the free group $F_2 = F(X_2)$ with basis $X_2$ as the distinguished subgroup of $F(X)$ generated by $X_2$.  
By $X_2^{\ast }$ we denote  the free submonoid of $F_2$ generated by $X_2.$

\begin{lemma}
\label{le:5.1}
Let $G = A \ast B$ be a free product of two non-trivial groups $A$ and $B.$  If  $w$  is a  proper word such that the verbal subgroup $w(G)$  has infinite index in $G$  then the set $w[G]$ is not rational in $G$.
\end{lemma}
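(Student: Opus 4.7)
The statement is essentially the argument already sketched at the end of Section~\ref{se:pre}; the task is to make it formal. The plan is to argue by contradiction, assuming $w[G]$ is rational in $G$, and to derive that $w(G)$ must simultaneously be finitely generated and not finitely generated.

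First, by Gilman's theorem (recalled in Section~\ref{se:pre}), every rational subset of a group generates a finitely generated subgroup. Applying this to $L = w[G]$ gives that the verbal subgroup $w(G) = \langle w[G] \rangle$ is finitely generated. Second, $w(G)$ is a verbal subgroup, hence fully invariant and in particular normal in $G$. By hypothesis it has infinite index in $G = A \ast B$. Now invoke the classical theorem of Magnus-Karrass-Solitar and Baumslag (cited in Section~\ref{se:pre} as \cite{MKS} and \cite{Bau}): any non-trivial normal subgroup of infinite index in a non-trivial free product $A \ast B$ fails to be finitely generated. Combining these two facts produces the desired contradiction, provided $w(G)$ is non-trivial.

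The only remaining point is to check $w(G) \neq 1$. This is where the properness of $w$ enters. Since $w$ is proper we have $e(w) \neq 1$, so $w$ itself is non-trivial in $F(X)$. In any non-trivial free product $G = A \ast B$ that contains a copy of the free group $F_2$ (which is the case as soon as $|A| + |B| \geq 3$, by a standard ping-pong / Kurosh argument), evaluating $w$ on free generators gives a non-trivial $w$-element in $G$, so $w(G) \neq 1$.

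The main potential obstacle is the one degenerate case $G \cong \mathbb{Z}/2 \ast \mathbb{Z}/2$, where $G$ contains no copy of $F_2$. But this $G$ is the infinite dihedral group, which is virtually cyclic, so its verbal subgroups are completely transparent: any proper commutator word gives $w(G)$ equal to the unique infinite cyclic subgroup of index $4$, which has \emph{finite} index, so the hypothesis of the lemma is never satisfied in this case and it may be ignored. Thus the proof reduces cleanly to the two-line combination Gilman~$+$~MKS/Bau.
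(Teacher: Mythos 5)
Your core argument is exactly the paper's proof: if $w[G]$ were rational then by Gilman's theorem the subgroup it generates, $w(G)$, would be finitely generated, while by the Baumslag/Magnus--Karrass--Solitar result a \emph{non-trivial} normal subgroup of infinite index in a non-trivial free product is never finitely generated. Where you go beyond the paper is in trying to justify the non-triviality of $w(G)$, which the paper simply asserts (``Observe that \dots $w(G)$ is a non-trivial normal subgroup \dots''). Your argument in the generic case is fine: if $G$ contains a free subgroup of rank $2$ (equivalently, not both $A$ and $B$ have order $2$ --- note that your numerical criterion $|A|+|B|\geq 3$ is not the right one, since it is also satisfied by $\mathbb{Z}/2 \ast \mathbb{Z}/2$), then evaluating the non-trivial word $w$ on a free tuple inside that subgroup gives $w(G)\neq 1$.

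Your disposal of the remaining case $G\cong \mathbb{Z}/2\ast\mathbb{Z}/2$, however, is incorrect. It is not true that the hypothesis of the lemma can never hold there: take a proper word that is a law in the metabelian group $D_\infty$, for instance $w=[[x_1,x_2],[x_3,x_4]]$. Then $w[G]=\{1\}$, so $w(G)=1$ does have infinite index and the hypothesis is satisfied, yet $w[G]$ is finite and hence rational. So in this degenerate case the conclusion genuinely fails, and the only way to dispose of it is to exclude $w(G)=1$ (equivalently $w[G]=\{1\}$) --- which is exactly what the paper's unproved ``Observe'' implicitly does, and which is harmless for every later application, since there $G$ always contains $F_2$. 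In particular your claim that every proper commutator word gives $w(D_\infty)$ equal to the index-$4$ infinite cyclic subgroup is false for the same reason. Summing up: the Gilman-plus-Baumslag skeleton is correct and identical to the paper's, but the edge case must be handled by adding the non-triviality assumption on $w(G)$, not by asserting that the hypothesis cannot occur.
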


\begin{proof}  Observe that for such $w$  the verbal subgroup $w(G)$ is a non-trivial normal subgroup of infinite index in $G$. By B. Baumslag's  result \cite{Bau} the subgroup $w(G)$ is not finitely generated. Since a subgroup generated in a group by a  rational subset has to be finitely generated (see \cite{Gil}, Theorem 4.2), the generating set  $w[G]$ of $w(G)$ is not rational in $G$.
\end{proof}

\begin{corollary}
\label{co:5.2}
Let $G = A \ast B$ be a free product of non-trivial groups $A$ and $B$ with infinite abelianization $G_{ab} = G/[G, G].$
Then for any non-trivial commutator word $w$ the set $w[G]$ is not rational in $G$.
\end{corollary}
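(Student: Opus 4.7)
The plan is to reduce the statement to Lemma \ref{le:5.1} by checking its two hypotheses: that $w$ is proper, and that $w(G)$ has infinite index in $G$.

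First I would recall from the discussion in Section \ref{se:pre} that any non-trivial commutator word $w$ (i.e., a non-trivial word with $e(w)=0$) is automatically proper, so this hypothesis is free.

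Next, the key observation is that if $w=w(x_1,\ldots,x_n)$ has exponent sums $t_1=\cdots=t_n=0$, then under the canonical projection $\pi\colon G \twoheadrightarrow G_{ab}$, every value $w(g_1,\ldots,g_n)$ maps to $0$ in $G_{ab}$ (since in the abelian quotient the value depends only on the exponent sums). Consequently $w[G]\subseteq [G,G]$, and therefore the verbal subgroup $w(G)=\langle w[G]\rangle$ is contained in the commutator subgroup $[G,G]$. Since $G_{ab}=G/[G,G]$ is infinite by hypothesis, $[G,G]$ has infinite index in $G$, and hence so does $w(G)$.

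With both hypotheses of Lemma \ref{le:5.1} verified, the lemma immediately gives that $w[G]$ is not rational in $G$. There is no serious obstacle here — the whole content of the corollary is packaged in Lemma \ref{le:5.1} combined with the elementary observation $w[G]\subseteq[G,G]$ for commutator words.
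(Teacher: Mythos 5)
Your proposal is correct and is exactly the intended derivation: the paper states this corollary without proof precisely because it follows from Lemma \ref{le:5.1} via the two observations you make (a non-trivial commutator word is proper, as noted in Section \ref{se:pre}, and $w[G]\subseteq[G,G]$ forces $w(G)\leq[G,G]$, which has infinite index since $G_{ab}$ is infinite). Nothing is missing, and your argument matches the paper's implicit one.
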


\begin{corollary}
\label{co:5.3}
 Let $F$ be a free non-abelian group. Then for any non-trivial commutator word $w$ the set $w[F]$ 
 is not rational in $F.$
\end{corollary}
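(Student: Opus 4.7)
The plan is to derive Corollary \ref{co:5.3} as an immediate specialization of Corollary \ref{co:5.2}. The only thing to verify is that a free non-abelian group $F$ fits the hypotheses of that corollary, namely that $F$ can be presented as a free product $A \ast B$ of two non-trivial groups with infinite abelianization.

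First I would fix a basis $X$ of $F$ with $|X| \geq 2$, pick any $x \in X$, and write $F = \langle x \rangle \ast F(X \setminus \{x\})$. Both factors are non-trivial free groups (the second one has rank at least one since $|X| \geq 2$), so $F$ is a non-trivial free product. Next I would observe that the abelianization $F_{ab} = F/[F,F]$ is a free abelian group of rank $|X| \geq 2$, which is infinite. Thus the hypotheses of Corollary \ref{co:5.2} are satisfied for $G = F$.

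Applying Corollary \ref{co:5.2} with this decomposition yields that for any non-trivial commutator word $w$ the set $w[F]$ is not rational in $F$, which is exactly the claim. There is no real obstacle here; the work has all been done in Lemma \ref{le:5.1} (via B.~Baumslag's theorem that a non-trivial normal subgroup of infinite index in a non-trivial free product is not finitely generated) together with the observation that for a commutator word $w$ the verbal subgroup $w(F)$ sits inside $[F,F]$, whose quotient $F/[F,F]$ is infinite, forcing $w(F)$ to have infinite index. The only minor point to be careful about is the case of infinite rank, which is handled uniformly by the decomposition above.
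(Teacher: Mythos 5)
Your proof is correct and matches the paper's (implicit) argument: Corollary \ref{co:5.3} is deduced by writing $F$ as a non-trivial free product with infinite abelianization and invoking Corollary \ref{co:5.2}, which in turn rests on Lemma \ref{le:5.1} and Baumslag's theorem exactly as you describe. Your explicit decomposition $F = \langle x \rangle \ast F(X \setminus \{x\})$ and the rank/abelianization check are precisely the routine verifications the paper leaves to the reader.
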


For the rest of the paper we fix a proper non-commutator word $w$. Observe, that $e = e(w) \geq 2$.
To apply Rhemtulla's criterion we view the free group $F_2$ as a free product 
$F_2 = \langle x_1 \rangle \ast \langle x_2 \rangle   = A \ast B,$ equipped with the standard sign function given by the submonoid $X_2^{\ast }.$ This sign function is  strongly reduced.

\begin{lemma}
\label{le:5.4}
 Let $p,q \in X_2^{\ast }$ and $E \subseteq X_2^{\ast }$ be such that $pE^{\ast }q \subseteq w[F_2]$. Then one of the following  hold:
 \begin{itemize}
 \item [1)] $|u| \geq 2$  for every $u \in E^\ast$. In this case $supp(pE^{\ast }q)$ is finite.
  \item [2)] $|u| = 1$ for every $u \in E^\ast$. In this case either $E^\ast \subseteq x_1^\ast$ or $E^\ast \subseteq x_2^\ast$. 
\end{itemize}

\end{lemma}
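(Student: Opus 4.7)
I split by the dichotomy in the conclusion, treating the "mixed" scenario (some non-identity $u \in E^{\ast}$ with $|u|=1$ and some with $|u|\geq 2$) as impossible under the hypothesis. Case 2 is immediate: if every non-identity $u \in E^{\ast}$ has $|u| = 1$, then each such element is a positive power of $x_1$ or of $x_2$; if both $x_1^a, x_2^b \in E^{\ast}$ for some $a,b\geq 1$, their product $x_1^a x_2^b \in E^{\ast}$ would have syllable length $2$, contradicting the case hypothesis. Hence $E^{\ast}\setminus\{1\}$ lies in $x_1^+$ or in $x_2^+$, giving $E^{\ast}\subseteq x_1^{\ast}$ or $E^{\ast}\subseteq x_2^{\ast}$.

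For Case 1, assume some $v \in E^{\ast}$ has $|v| \geq 2$ and, for contradiction, that $supp(pE^{\ast}q)$ is infinite. (The same construction below rules out the mixed scenario by taking $u_M := (u')^{k}$ for any $|u'| = 1$ element $u' \in E^{\ast}$ and an appropriate $k$.) By the $x_1 \leftrightarrow x_2$ symmetry and pigeonhole, for arbitrarily large positive integers $M$ there is $u_M \in E^{\ast}$ such that the reduced form of $p u_M q$ contains a syllable $x_1^M$. Write $v = v_1 \ldots v_m$ in reduced form with $m \geq 2$; after replacing $v$ by $v^2$ and possibly swapping $x_1 \leftrightarrow x_2$, I assume $v_1 \in \langle x_1\rangle$. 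Choosing $M$ larger than every $x_1$-exponent appearing in $supp(v) \cup supp(p) \cup supp(q)$ and distinct positive integers $N_1 < N_2 < \ldots < N_L$ of appropriate parity, I apply Rhemtulla's criterion to
\[
U_L \;=\; p\, u_M v^{N_1} u_M v^{N_2} u_M \cdots v^{N_L} u_M\, q \;\in\; pE^{\ast}q \subseteq w[F_2].
\]
A junction-by-junction analysis shows that a distinguished large $x_1$-syllable $B$ — equal to $x_1^M$ itself or to the predictable merger of $x_1^M$ with the leading $v_1$ of a $v^{N_i}$ block, depending on whether the witnessing syllable is internal or terminal in $u_M$ — occurs at exactly $L$ positions in the reduced form of $U_L$, and no intervening syllable equals $B$. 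The number of syllables strictly between two consecutive $B$'s is a non-constant affine function of $N_i$, so the distinct $N_i$'s produce $L-1$ pairwise distinct $B$-gap lengths $2k_i - 1$, each with $\delta_{B,k_i}(U_L) = 1 \not\equiv 0 \pmod{e}$ since $e \geq 2$. Therefore $\gamma_{B,e}(U_L) \geq L-1$, contradicting Rhemtulla's bound $N_0(w,B)$ once $L - 1 > N_0$.

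The main obstacle is the syllable-by-syllable bookkeeping at the $u_M$–$v$ junctions, needed to identify which syllable plays the role of the marker $B$, to count its occurrences, and to ensure that the $L-1$ gap lengths obtained are genuinely distinct. This depends on whether the relevant end-syllables of $u_M$, $v$, $p$, $q$ lie in $\langle x_1\rangle$ or $\langle x_2\rangle$; the subtle point is that when $m = |v|$ is odd one must restrict to even $N_i$'s so that $N_i m - 1$ (the count of syllables between consecutive markers) is odd and thus a valid gap length $2k_i - 1$. Taking $M$ large enough to separate $B$ from every fixed syllable in $supp(v) \cup supp(p) \cup supp(q)$ then rules out accidental coincidences and makes the Rhemtulla contradiction transparent.
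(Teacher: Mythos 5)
Your proposal is correct in substance but takes a genuinely different route from the paper. The paper first proves a structural claim: any two elements $u,v\in E^{\ast}$ of syllable length at least $2$ satisfy $supp(u^0)=supp(v^0)$, by applying Rhemtulla's criterion to the words $puuv^nuuq$ with a marker $b\in supp(u^0)\setminus supp(v^0)$; the exclusion of mixed lengths then comes from comparing $u$ with $uv^n$, and finiteness of $supp(pE^{\ast}q)$ follows because every syllable of every $u\in E^{\ast}$ lies in the fixed finite set $supp(v^0)$ together with the finitely many positive divisors of its elements. You instead argue directly from the assumed infinitude of $supp(pE^{\ast}q)$: extract an arbitrarily large marker syllable, fix one spacer $v$ of syllable length $\geq 2$, and interleave $L$ distinct powers $v^{N_1},\dots,v^{N_L}$ inside a \emph{single} element $U_L\in pE^{\ast}q$, making $\gamma_{B,e}(U_L)$ grow with $L$. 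The shared engine is the same (positivity forces $\delta_{B^{-1},k}=0$; long $v$-powers create long gaps), but your single-element interleaving is in fact the more faithful way to contradict the criterion as stated, since $\gamma_{b,e}$ is a per-element count and must be made large on one element of $w[F_2]$, whereas the paper's family $puuv^nuuq$ yields only one long unbalanced gap per element. What the paper's route buys is the extra structural information (common cyclic support, and an explicit finite set $K$), which you do not recover; what yours buys is a shorter path to exactly the statement of the lemma, with the mixed case handled by the very same construction.

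Two repairs to your sketch, neither fatal. First, the assertion that the marker occurs at exactly $L$ positions with no intervening occurrence is too strong: $B$ may also occur at other positions inside the copies of $u_M$ or at the $p$- and $q$-junctions. What is true, and suffices, is that its occurrences sit at the same relative positions in every middle period, while no occurrence lies strictly inside a block $v^{N_i}$ (those syllables have exponents at most $2\max$ of the exponents in $supp(v)$, far below $M$); hence the gap bridging block $i$ has length affine in $N_i$ with slope $|v|$ or $|v|-1\geq 1$ and the same constant for all middle blocks, so well-spaced $N_i$ give many gap lengths of multiplicity exactly one, which is all that is needed. Second, the parity concern is vacuous: in an alternating reduced form two syllables equal to $B$ are automatically an even number of positions apart, so every such gap has odd length; likewise the normalization ``replace $v$ by $v^2$ and swap $x_1\leftrightarrow x_2$ so that $v_1\in\langle x_1\rangle$'' is neither needed nor actually achieved by those moves, and your displayed $U_L$ contains $L+1$ copies of $u_M$, so the long gaps number about $L$ rather than $L-1$ --- all harmless.
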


\begin{proof} Let $u, v \in  E^\ast$. Since $E \subseteq X_2^{\ast }$ the elements $u, v$ are positive, so they are equal to their cores $u = {\bar u}, v = {\bar v}$.  Assume that ${\bar u}, {\bar v}$ are given in the reduced forms
$${\bar u} = u_1u_2 ... u_k,  \ \ \ {\bar v} = v_1v_2 ... v_l. $$
We prove first that if  $k, l  \geq 2$  then  $supp(u^0) = supp(v^0)$. 
Notice, that either $supp(u^0) = \{u_1, u_2, ..., u_k\}$ for even $k,$ or $supp(u^0) = \{u_2, u_3, ..., (u_ku_1)\}$ for odd $k$.   

Since 
\begin{equation}
 \label{eq:4}
puu\{v\}^{\ast}uuq \subseteq   X_2^{\ast } \cap w[F_2] 
\end{equation}
 the Rhemtulla's criterion shows that $supp(u^0) \subseteq supp(v^0).$  Indeed, $|\bar v|\geq 2$ implies that the length of $v^n$ strictly grows with $n$, so if $b$ is a factor in $supp(u^0)$ but not in $supp(v^0)$ then for infinitely many $k$ there is a number $n = n(k)$ such that the word $puuv^nuuq$ contains precisely one  $b$-gap of length $k$. So the gap function $\delta_{b,k}(u)$ is equal to 1 for infinitely many $k$ on $puu\{v\}^{\ast}uuq $. Notice that $\delta_{b^{-1},k}(puuv^nuuq) = 0$ since the words are positive. Hence the function   $\gamma_{b,e} $ is unbounded on   $w[F_2] $ - contradicting the Remtulla's criterion.  Similarly, we show that 
 $supp(v^0) \subseteq supp(u^0)$. Hence $supp(u^0) = supp(v^0)$ as claimed.

Observe now that if there are elements $u, v \in E $ such that $|u| \geq 2$ and $|v| = 1$ then for sufficiently large $n$ one has $|u|, |uv^n| \geq 2$ and  $supp(u^0) \neq supp((uv^n)^0)$ - contradicting the statement above. 

The argument above shows that either all elements in $E^{\ast}$ are of syllable length greater then $1,$ or all of them have length $1.$  This proves 2) and the first part of 1). To finish the proof observe that  for any 
$u = u_1\ldots u_k \in E^\ast$ one has  $u_2, \ldots, u_{k-1}, u_ku_1 \in supp(v^0)$. Since    $u_1,u_k, u_1u_k$ are positive there are only finitely many choices for $u_1$ and $u_k$ as divisors of $u_ku_1$.
This proves that there is a finite set $K \subseteq X_2^\ast$ such that for any $u \in E^\ast$ $supp(u) \subseteq K$. It follows that $supp(pE^{\ast }q)$ is finite as claimed.

\end{proof}

\begin{corollary}
 \label{co:5.5}
Let $L \in Rat(X_2^{\ast}) \cap w[F_2]$. Then there is a finite set $K_L \subseteq x_1^\ast \cup x_2^\ast$ and a natural number $n = n(L)$ such that every element  $u \in L$ can be presented as a product of the following type:
$$
u = s_1t_1s_2 \ldots s_nt_n,
$$
where $supp(s_i) \subseteq K_L$ and $t_i \in x_1^\ast \cup x_2^\ast$.  
\end{corollary}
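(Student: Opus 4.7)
My plan is to write $L$ in the rational normal form~(\ref{eq:1}), apply Lemma~\ref{le:5.4} to each starred factor separately, and then regroup the result.

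Since $L \in Rat(X_2^{\ast})$, by~(\ref{eq:1}) we may write
$$L = \bigcup_{i=1}^{k} a_{i1} E_{i1}^{\ast} a_{i2} \cdots a_{i t_i} E_{i t_i}^{\ast} a_{i,t_i+1}$$
with all $a_{ij} \in X_2^{\ast}$ and $E_{ij} \subseteq X_2^{\ast}$. Fix $i$ and $j$. Because $1 \in E_{ij'}^{\ast}$ for every $j'$, substituting the identity for every $E_{ij'}^{\ast}$ with $j' \neq j$ yields
$$p_{ij} E_{ij}^{\ast} q_{ij} \subseteq L \subseteq w[F_2],$$
where $p_{ij} = a_{i1}\cdots a_{ij}$ and $q_{ij} = a_{i,j+1}\cdots a_{i,t_i+1}$ both lie in $X_2^{\ast}$. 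Lemma~\ref{le:5.4} then classifies each $E_{ij}^{\ast}$ as either \emph{type 1} --- every non-identity element has syllable length at least $2$ and $supp(E_{ij}^{\ast})$ is finite --- or \emph{type 2}, in which case $E_{ij}^{\ast}\subseteq x_1^{\ast}$ or $E_{ij}^{\ast}\subseteq x_2^{\ast}$.

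Let $K_0\subseteq x_1^{\ast}\cup x_2^{\ast}$ be the finite union of $supp(a_{ij})$ (over all $i,j$) together with $supp(E_{ij}^{\ast})$ over all type-1 pairs $(i,j)$, and set $n = 1 + \max_i t_i$. Given $u$ in the $i$-th summand, write $u = a_{i1} e_{i1} \cdots a_{it_i} e_{it_i} a_{i,t_i+1}$ with $e_{ij} \in E_{ij}^{\ast}$. Let $j_1 < \cdots < j_r$ (with $r \leq t_i$) be the type-2 positions, set $j_0 = 0$ and $j_{r+1} = t_i + 1$, and define
$$s_\ell = a_{i,j_{\ell-1}+1} e_{i,j_{\ell-1}+1} \cdots a_{i,j_\ell}, \qquad t_\ell = e_{i,j_\ell} \in x_1^{\ast}\cup x_2^{\ast}$$
for $\ell = 1,\ldots,r$, together with $s_{r+1} = a_{i,j_r+1} e_{i,j_r+1} \cdots a_{i,t_i+1}$ and $t_{r+1} = 1$; pad with $s_\ell = t_\ell = 1$ for $r+1 < \ell \leq n$ so that $u = s_1 t_1 \cdots s_n t_n$. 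Inside each $s_\ell$ we are concatenating at most $2n$ factors, each of support contained in $K_0$; after reducing, every syllable of $s_\ell$ is either already in $K_0$ or arises from merging consecutive same-factor syllables, hence is a product of at most $2n$ elements of $K_0$ belonging to a common free factor. Taking $K_L$ to be the (still finite) union of $K_0$ with all such bounded same-factor products gives $supp(s_\ell) \subseteq K_L \subseteq x_1^{\ast}\cup x_2^{\ast}$, as required.

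The substantive ingredient is the dichotomy provided by Lemma~\ref{le:5.4}; the rest is combinatorial bookkeeping. The one delicate point is the syllable-merging that occurs when concatenating the $a_{ij}$'s and the type-1 $e_{ij}$'s inside a single $s_\ell$, but since both the alphabet $K_0$ and the number of factors per $s_\ell$ are finite, the resulting set $K_L$ of admissible syllables is finite.
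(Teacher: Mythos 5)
Your proposal is correct and takes essentially the same route as the paper: write $L$ in the normal form (\ref{eq:1}), use $1\in E_{ij'}^{\ast}$ to isolate each starred factor as $p_{ij}E_{ij}^{\ast}q_{ij}\subseteq w[F_2]$ with positive $p_{ij},q_{ij}$, and invoke Lemma \ref{le:5.4}. The only difference is that you spell out the regrouping into $s_\ell t_\ell$ and the finite syllable-merging bookkeeping that the paper leaves implicit in ``the result follows from Lemma \ref{le:5.4}.''
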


\begin{proof}
Since $L \in Rat(X_2^{\ast})$  it can be presented in the form 
$$L= a_{i1}E_{i1}^{\ast } \ldots a_{ij}E_{ij}^{\ast } a_{ij+1} \ldots E_{it_i}^{\ast}a_{it_i+1} $$
For $E_{ij}$ as above, put $E = E_{ij}$ and denote  $p = a_{i1} \ldots a_{ij}, q = a_{ij+1} \ldots a_{it_i+1} \in X_2^{\ast }.$ Since each $E_{il}$ contain $1$ we have $pE^{\ast}q \subseteq Rat(X_2^{\ast}) \cap w[F_2]$. Now the result follows from  Lemma \ref{le:5.4}.
\end{proof}

\begin{theorem}
\label{the:5.6} \label{co:5.7}
Let $F$ be a free non-abelian group  and $w$ be a  proper word. Then the set  $w[F]$ 
is not rational in $F.$
\end{theorem}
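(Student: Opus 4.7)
The plan is to bootstrap Corollaries 4.9 and 5.5 into a contradiction. By Corollary 5.3 the commutator case is already handled, so I assume $w$ is proper non-commutator, whence $e := e(w) \geq 2$. First I would reduce from $F$ to the rank-$2$ subgroup $F_2 = F(x_1, x_2)$: let $\rho \colon F \to F_2$ be the retraction fixing $x_1, x_2$ and sending all other basis elements to $1$. Since $\rho$ is a monoid homomorphism and rational sets are preserved by homomorphic images, $w[F] \in Rat(F)$ would give $\rho(w[F]) \in Rat(F_2)$; and a direct verbal-substitution computation shows $\rho(w[F]) = w[F_2]$. It therefore suffices to derive a contradiction from the assumption $w[F_2] \in Rat(F_2)$.

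Under this assumption, Corollary 4.9 gives $L := w[F_2] \cap X_2^{\ast} \in Rat(X_2^{\ast})$, and then Corollary 5.5 supplies a finite set $K_L \subseteq x_1^{\ast} \cup x_2^{\ast}$ and an integer $n = n(L)$ such that every $u \in L$ admits a factorization $u = s_1 t_1 s_2 t_2 \cdots s_n t_n$ with $\mathrm{supp}(s_i) \subseteq K_L$ and $t_i \in x_1^{\ast} \cup x_2^{\ast}$. The structural fact I want to extract is that the reduced form of any such $u$ contains at most $|K_L| + n$ distinct syllables. Indeed, each $s_i$ is already in reduced form with syllables in $K_L$ and alternating factors, so no merging takes place strictly inside any $s_i$. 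Every maximal same-factor block in the raw product is therefore either (i) a single unmerged $K_L$-syllable from some $s_i$, contributing one of at most $|K_L|$ values, or (ii) a merged block containing at least one of the $n$ syllables $t_j$; since there are only $n$ of the $t_j$'s in total, at most $n$ blocks of type (ii) can occur, contributing at most $n$ additional distinct values.

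To exhibit an element of $L$ violating this bound, choose distinct positive integers $b_1 < b_2 < \cdots < b_M$ with $M + 1 > |K_L| + n$, set
\[
g = x_1 x_2^{b_1} x_1 x_2^{b_2} \cdots x_1 x_2^{b_M},
\]
and consider $g^e$. Since the first and last syllables of $g$ lie in different free factors of $F_2 = \langle x_1 \rangle \ast \langle x_2 \rangle$, no cancellation occurs in $g^e$: its reduced form is the $e$-fold concatenation of $g$ and contains exactly the $M + 1$ distinct syllables $x_1, x_2^{b_1}, \ldots, x_2^{b_M}$. On the other hand, the Bezout identity $\sum r_i t_i = e$ recalled in the Preliminaries yields $g^e = w(g^{r_1}, g^{r_2}, \ldots) \in w[F_2]$, so $g^e \in L$, contradicting the bound above.

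The main obstacle is the structural count in the middle step: one has to verify carefully that in reducing $u = s_1 t_1 \cdots s_n t_n$ all merging is genuinely localized at the $n$ positions occupied by the $t_j$'s, so that it introduces at most $n$ new distinct syllables beyond those already in $K_L$. Once this bound is secured, the explicit violator $g^e$ finishes the argument.
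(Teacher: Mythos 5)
Your proof is correct and takes essentially the same route as the paper: reduce to $F_2$ via the retraction, apply Corollaries \ref{co:4.9} and \ref{co:5.5}, and contradict the resulting factorization with a positive $e$-th power supplied by the Bezout identity. The only (cosmetic) difference is the witness: the paper uses $(x_1^tx_2)^{le}$ with $x_1^t\notin K_L$ and $l>n(L)$, counting occurrences of a single syllable outside $K_L$ rather than the number of distinct syllables, which sidesteps the more delicate merged-block count you flag as the main obstacle.
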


\begin{proof} Let $w$ be a proper word such that $w[F]$ is rational in $F$. Notice that $e = e(w) \geq 2$ since $w$ is proper. By Corollary \ref{co:5.3} the word $w$ is not a commutator word. Observe that $\bar{L} = w[F] \cap F_2 = w[F_2]$ is rational in $F_2$ as a homomorphic image of a rational set under standard homomorphism $F \rightarrow F_2.$    By Corollary \ref{co:4.9} the set $L =
\bar{L} \cap X_2^{\ast }$ is rational in $X_2^{\ast }.$   By Corollary \ref{co:5.5} there is  a finite set $K_L \subseteq x_1^\ast \cup x_2^\ast$ and a natural number $n = n(L)$ such that every element  $u \in L$ can be presented as a product of the following type:
\begin{equation}\label{eq:siti}
u = s_1t_1s_2 \ldots s_nt_n,
\end{equation}
where $supp(s_i) \subseteq K_L$ and $t_i \in x_1^\ast \cup x_2^\ast$.  Chose  $t \in \mathbb{N}$ large enough so $x_1^t \not \in K_L$. Chose $l \in \mathbb{N}$ such that $l > n(L)$. Then the word 
 $u = (x_1^tx_2)^{le}$ belongs to $w[F_2]$, hence it belongs to $L$. However,  $u$ cannot be presented in the form (\ref{eq:siti}) - contradiction, which  proves the theorem.

\end{proof}

Theorem  \ref{co:5.7} can be generalized into free products as follows.

\begin{theorem}
\label{th:5.8}
Let $A$ and $B$ be groups containing elements of infinite order $x_1 \in A$, $x_2 \in B,$ and $G = A \ast B.$  If  the rational
sets $Rat(A)$ and $Rat(B)$ are Boolean algebras then  for every proper word $w$ with $e(w) \geq 2$ the set $w[G]$ is not rational in $G.$
\end{theorem}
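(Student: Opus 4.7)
The plan is to mirror the proof of Theorem~\ref{the:5.6}, using the natural copy of the free group of rank~$2$ sitting inside $G$. Assume for contradiction that $w[G]$ is rational in $G$. By Bazhenova's theorem \cite{Baz1}, $Rat(G)$ is a Boolean algebra because $Rat(A)$ and $Rat(B)$ are. Since $x_1 \in A$ and $x_2 \in B$ have infinite order and lie in distinct free factors, the subgroup $F_2 := \langle x_1, x_2\rangle \leq G$ is naturally isomorphic to the free product $\langle x_1\rangle \ast \langle x_2\rangle$ of two infinite cyclic groups, and its positive submonoid $X_2^\ast = \{x_1, x_2\}^\ast$ is a rational subset of $G$. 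Hence $L := w[G] \cap X_2^\ast$ is rational in $G$.

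The key technical step --- and the main obstacle --- is to upgrade $L$ to a rational subset of $X_2^\ast$. In Theorem~\ref{the:5.6} the retraction $F \to F_2$ accomplished this immediately; in the present setting no retraction $G \to F_2$ is available, so we proceed in two stages. First, a rational subset of $G = A \ast B$ supported in the finitely generated subgroup $F_2$ must be shown to be rational in $F_2$; this step is delicate and relies on the Boolean-algebra analysis of rational subsets of free products in \cite{Baz1}. Second, Corollary~\ref{co:4.9} applied inside $F_2 = \langle x_1\rangle \ast \langle x_2\rangle$ then yields $L = L \cap X_2^\ast \in Rat(X_2^\ast)$.

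With $L$ rational in $X_2^\ast$, the remainder of the argument is combinatorial and parallels Theorem~\ref{the:5.6}. Rhemtulla's criterion from Section~\ref{se:rhe1} applies to $G = A \ast B$ with $b = x_2$ (and $x_2 \neq x_2^{-1}$, since $x_2$ has infinite order), giving a uniform bound on $\gamma_{x_2, e}$ over all of $w[G]$. The proofs of Lemma~\ref{le:5.4} and Corollary~\ref{co:5.5} use only this bound and the observation that any element of $X_2^\ast$ has the same reduced form in $F_2$ and in $G$; hence they go through verbatim with $w[G]$ in place of $w[F_2]$. Therefore every $u \in L$ admits a factorization $u = s_1 t_1 \ldots s_n t_n$ with $supp(s_i)$ contained in a fixed finite set $K_L \subseteq x_1^\ast \cup x_2^\ast$ and $t_i \in x_1^\ast \cup x_2^\ast$, for some $n = n(L)$. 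Choosing $t$ with $x_1^t \notin K_L$ and $l > n$, the element $(x_1^t x_2)^{le}$ lies in $w[F_2] \subseteq w[G]$ (as an $e$-th power, which is always a $w$-value) and in $X_2^\ast$, hence in $L$, but admits no such factorization --- the desired contradiction.
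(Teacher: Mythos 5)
Your proposal is correct and follows essentially the same route as the paper: assume $w[G]\in Rat(G)$, use Bazhenova's Boolean-algebra result \cite{Baz1} to intersect down inside $F_2=\langle x_1\rangle \ast \langle x_2\rangle$, descend to $Rat(F_2)$ and then to $Rat(X_2^{\ast})$ via Corollary~\ref{co:4.9}, and rerun the Rhemtulla-based combinatorics of Lemma~\ref{le:5.4} and Corollary~\ref{co:5.5} with $w[G]$ in place of $w[F_2]$, killing rationality with the word $(x_1^t x_2)^{le}$. The step you single out as the main obstacle (a rational subset of $G$ contained in the finitely generated subgroup $F_2$ is rational in $F_2$) is exactly the step the paper also needs; it disposes of it by citing Bazhenova \cite{Baz2} rather than attributing it to \cite{Baz1}, but otherwise your argument matches the paper's.
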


\begin{proof}  Notice first that by  Bazhenova's
result \cite{Baz1} the set $Rat(G)$ is a Boolean algebra. 
Obviously, the subgroup  generated by $x_1$ and $x_2$ in $G$  is a free subgroup $F_2$ with basis $\{x_1, x_2\}$. If  $w[G] \in Rat(G)$  
  then $\bar{L} = w[G] \cap F_2$ is rational in $G$ by \cite{Baz1}, hence by another   Bazhenova's result \cite{Baz2} $\bar{L}$ is rational in $F_2$.   By Corollary \ref{co:4.9} the set $L =
\bar{L} \cap X_2^{\ast }$ is rational in $X_2^{\ast }  = \{x_1,x_2\}^\ast$ and so has a presentation of the form (\ref{eq:1}) in $X_2^{\ast }.$ Since $w[F_2] \subseteq w[G]$ and the free decompositions of $F_2$ is induced from the free decomposition of $G$ one can complete the  proof by an 
argument similar to the one from the proof of Theorem \ref{the:5.6}.

\end{proof}

\begin{corollary}
 \label{co:5.9}
Let $A$ and $B$ be groups containing elements of infinite order, and $G = A \ast B.$ Let the abelianization $G_{ab}$ is infinite, and the rational sets $Rat(A)$ and $Rat(B)$ are Boolean algebras. Then for every proper word $w$ the set $w[G]$ is not rational in $G.$ 
\end{corollary}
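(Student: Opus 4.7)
The plan is to split on whether $w$ is a commutator word and invoke the two previous results already established. Recall from the discussion after the definition of $e(w)$ that a non-trivial word $w$ is proper precisely when $e(w) \neq 1$, so for a proper word $w$ we have either $e(w) = 0$ (in which case $w$ is, by definition, a commutator word) or $e(w) \geq 2$. These two cases are exhausted by Corollary \ref{co:5.2} and Theorem \ref{th:5.8}, respectively, and the hypotheses of Corollary \ref{co:5.9} are tailored to supply exactly what each of those results requires.

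First, I would handle the commutator case. If $e(w) = 0$, then $w$ is a non-trivial commutator word (it must be non-trivial since every proper word is non-trivial). The hypotheses give $G = A \ast B$ a free product of non-trivial groups (non-trivial because each contains an element of infinite order) with infinite abelianization, which are precisely the conditions of Corollary \ref{co:5.2}. That corollary then yields directly that $w[G]$ is not rational in $G$.

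Next I would handle the case $e(w) \geq 2$. Here the hypotheses on elements of infinite order in $A$ and $B$ and on $Rat(A), Rat(B)$ being Boolean algebras are exactly those required by Theorem \ref{th:5.8}, which immediately gives that $w[G]$ is not rational in $G$. Combining the two cases completes the proof.

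There is essentially no real obstacle here: the corollary is a packaging result, and the only thing that needs mild attention is verifying that the dichotomy $e(w) = 0$ versus $e(w) \geq 2$ exhausts all proper words, which was already made explicit in Section \ref{se:pre}. I would keep the write-up short, simply noting the case split and citing Corollary \ref{co:5.2} and Theorem \ref{th:5.8}.
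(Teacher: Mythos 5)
Your proof is correct and follows exactly the paper's route: the paper likewise deduces the corollary from Corollary \ref{co:5.2} (the commutator case $e(w)=0$) together with Theorem \ref{th:5.8} (the case $e(w)\geq 2$). Your write-up merely makes the case split and hypothesis-matching explicit, which the paper leaves implicit.
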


\begin{proof}
 Follows from Corollary \ref{co:5.2} and Theorem \ref{th:5.8}.
\end{proof}

\begin{corollary}
 \label{co:5.9b}
Let $A$ and $B$ be infinite finitely generated abelian groups  and $G = A \ast B.$ Then for every proper word $w$ the set $w[G]$ is not rational in $G.$ 
\end{corollary}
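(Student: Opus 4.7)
The plan is to reduce the statement to Corollary~\ref{co:5.9} by verifying all of its hypotheses for infinite finitely generated abelian groups $A$ and $B$. That corollary requires (i) both $A$ and $B$ contain elements of infinite order, (ii) the abelianization $G_{ab}$ of $G = A \ast B$ is infinite, and (iii) $Rat(A)$ and $Rat(B)$ are Boolean algebras.

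Conditions (i) and (ii) are immediate from the structure theory of finitely generated abelian groups. Writing $A \cong \MZ^r \oplus T_A$ with $T_A$ finite, the hypothesis that $A$ is infinite forces $r \geq 1$, so $A$ contains an element of infinite order; the same applies to $B$. Since $A$ and $B$ are already abelian, the abelianization of the free product satisfies $G_{ab} = A \oplus B$, which is infinite.

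The only substantive step is (iii). The plan is to appeal to the classical Eilenberg--Sch\"utzenberger theorem identifying the rational subsets of a finitely generated abelian group with the semilinear sets, together with the Ginsburg--Spanier theorem that the semilinear subsets of $\MZ^n$ form a Boolean algebra. These combine to give that $Rat(A)$ and $Rat(B)$ are each Boolean algebras. The main---indeed only---potential obstacle is locating a clean reference for this fact in the desired generality; no new argument is required.

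With (i)--(iii) in hand, Corollary~\ref{co:5.9} applies verbatim and yields the desired conclusion that $w[G]$ is not rational in $G$ for every proper word $w$.
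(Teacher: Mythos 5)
Your proposal is correct and follows essentially the same route as the paper: reduce to Corollary~\ref{co:5.9}, where the only substantive hypothesis is that $Rat(A)$ and $Rat(B)$ are Boolean algebras (the existence of infinite-order elements and the infinitude of $G_{ab}$ being immediate for infinite finitely generated abelian factors). The only difference is the reference invoked for that fact: the paper cites Bazhenova's theorem \cite{Baz2} that rational sets in finitely generated nilpotent (hence abelian) groups form Boolean algebras, whereas you appeal to the equivalent classical Eilenberg--Sch\"utzenberger/Ginsburg--Spanier identification with semilinear sets; either citation closes the argument.
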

\begin{proof}
Bazhenova showed in  \cite{Baz2} that rational sets in finitely generated abelian groups form Boolean algebras. Now the result follows from  \ref{co:5.9}.
\end{proof}

Theorem \ref{th:5.8} and Corollaries \ref{co:5.9}  and \ref{co:5.9b} have far reaching generalizations. To explain we need the following simple but useful result.

\begin{lemma}
\label{th:5.10}
Suppose that a  group $H$ admits a homomorphism onto a group $G$ in which every set $w[G]$ for a  proper word $w$ is not rational.   Then for every proper  word $w$ the set $w[H]$ is not rational in $H.$

\end{lemma}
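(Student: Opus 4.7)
The plan is to prove the contrapositive by a direct image argument, using two standard facts: that rational subsets are preserved under group homomorphisms, and that verbal sets transform well under surjections.

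Let $\phi: H \to G$ be a surjective homomorphism, and suppose, toward a contradiction, that $w[H]$ is rational in $H$ for some proper word $w$. First I would observe that $\phi(w[H]) = w[G]$. The inclusion $\phi(w[H]) \subseteq w[G]$ is immediate from the fact that $\phi$ is a homomorphism, since $\phi(w(h_1,\ldots,h_n)) = w(\phi(h_1),\ldots,\phi(h_n))$. For the reverse inclusion, pick any $w(g_1,\ldots,g_n) \in w[G]$; by surjectivity choose preimages $h_i \in \phi^{-1}(g_i)$, and then $w(g_1,\ldots,g_n) = \phi(w(h_1,\ldots,h_n)) \in \phi(w[H])$.

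Next I would invoke the fact that if $\phi: H \to G$ is a homomorphism of groups (or monoids), then $\phi(\mathrm{Rat}(H)) \subseteq \mathrm{Rat}(G)$. This is essentially automatic from the inductive definition of rational sets: $\phi$ sends finite sets to finite sets and commutes with union, product, and the Kleene star, because $\phi(L_1 \cup L_2) = \phi(L_1) \cup \phi(L_2)$, $\phi(L_1 L_2) = \phi(L_1)\phi(L_2)$, and $\phi(L^\ast) = \phi(L)^\ast$. (Alternatively, one can argue via the automaton characterization stated in Section~\ref{se:pre}: given an $H$-automaton accepting $w[H]$, relabel each transition $(u, m, v)$ by $(u, \phi(m), v)$ to obtain a $G$-automaton accepting $\phi(w[H])$.)

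Combining these two observations, $w[G] = \phi(w[H])$ is a rational subset of $G$. Since $w$ is proper, this contradicts the standing hypothesis on $G$, so $w[H]$ cannot be rational in $H$. There is essentially no obstacle here; the only point worth being careful about is making sure the hypothesis "onto" is used in proving $w[G] \subseteq \phi(w[H])$, since without surjectivity one would only obtain rationality of the (possibly strictly smaller) set $w[\phi(H)]$ in $\phi(H)$, which is not what the statement of the lemma is about.
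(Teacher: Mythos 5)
Your argument is correct and is essentially identical to the paper's proof: both use the surjectivity of $\varphi$ to get $\varphi(w[H]) = w[G]$ and the fact that homomorphic images of rational sets are rational, yielding the contradiction. Your extra justification of the image fact (via the inductive definition or the automaton relabeling) only fills in details the paper delegates to \cite{Gil}.
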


\begin{proof} Suppose that $\varphi $ is a homomorphism of $H$ onto $G.$ Since for every word $w$ we have $w[G]  = \varphi (w[H]),$ and a homomorphic image of any rational set is rational (see \cite{Gil}) $w[H] \in  Rat(H)$ implies that $w[G] \in Rat(G)$ that contradicts our assumption. Hence, $w[H] \not\in Rat(H).$

\end{proof}

\begin{corollary}
\label{co:5.11}
Suppose that a  group $H$ admits a homomorphism onto a  free non-abelian group $F$.   Then for every proper  word $w$ the set $w[H]$ is not rational in $H.$

\end{corollary}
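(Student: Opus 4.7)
The statement is essentially a one-line corollary of the two preceding results, so the ``plan'' is really just an assembly. First, I would invoke Theorem \ref{the:5.6}, which guarantees that for a free non-abelian group $F$ and any proper word $w$, the set $w[F]$ fails to be rational in $F$. This supplies the hypothesis needed to apply the transfer lemma.

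Next, I would apply Lemma \ref{th:5.10} with the target group taken to be the free non-abelian group $F$ onto which $H$ surjects. Lemma \ref{th:5.10} states: if $H$ admits a homomorphism onto a group in which $w[\,\cdot\,]$ is non-rational for every proper $w$, then the same holds for $H$. Since Theorem \ref{the:5.6} provides precisely this property for $F$, the conclusion $w[H] \notin \mathrm{Rat}(H)$ follows immediately for every proper $w$.

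There is no real obstacle here; the content of the corollary is entirely shouldered by Theorem \ref{the:5.6} (which does the hard work in the free group $F$) and by the elementary observation in Lemma \ref{th:5.10} that rationality passes through homomorphisms, so non-rationality can be pulled back. The only thing worth double-checking is that the word $w$ being proper is the relevant hypothesis in both ingredients (it is), and that the image under the homomorphism $\varphi\colon H\to F$ of $w[H]$ is exactly $w[F]$ (it is, since $w$-values are preserved by homomorphisms and every $w$-value in $F$ lifts along any surjection).
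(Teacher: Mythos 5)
Your proposal is correct and is exactly the paper's intended argument: the corollary is stated immediately after Lemma \ref{th:5.10} precisely so that it follows by applying that lemma with target group $F$, whose hypothesis is supplied by Theorem \ref{the:5.6}. Your side remark that $\varphi(w[H]) = w[F]$ for a surjection $\varphi\colon H\to F$ is the same observation the paper uses inside the proof of Lemma \ref{th:5.10}, so nothing is missing.
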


There are many classes of groups which have free non-abelian quotients. We list some of them below.

\begin{corollary}
\label{co:5.11b}
In the following groups $H$  for every proper  word $w$ the set $w[H]$ is not rational:
\begin{itemize}
\item [1)] Pure braid groups $PB_n$ for $n \geq
 3$.
\item [2)] Non-abelian right angled Artin groups.
\item [3)] Finitely generated non-abelian residually free groups.
\end{itemize}
\end{corollary}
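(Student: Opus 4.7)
The plan is to apply Corollary \ref{co:5.11}: it suffices to exhibit, for each of the three classes, a surjective homomorphism from $H$ onto a non-abelian free group, and the conclusion follows.

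For the pure braid group $PB_n$ with $n \geq 3$, I would combine two classical facts. First, the ``forget the last strand'' homomorphism $PB_n \twoheadrightarrow PB_{n-1}$ is surjective; iterating it yields a surjection $PB_n \twoheadrightarrow PB_3$. Second, $PB_3 \cong F_2 \times \mathbb{Z}$, so projecting onto the first factor gives a surjection $PB_3 \twoheadrightarrow F_2$. Composing the two produces the desired quotient of $PB_n$ onto $F_2$, and Corollary \ref{co:5.11} applies.

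For a non-abelian right angled Artin group $A_\Gamma$ with defining graph $\Gamma$, non-abelianness means $\Gamma$ has two vertices $a,b$ that are \emph{not} joined by an edge. I would define $\varphi: A_\Gamma \to F(x_1,x_2)$ on the generators by $\varphi(a) = x_1$, $\varphi(b) = x_2$, and $\varphi(c) = 1$ for every other vertex $c$ of $\Gamma$. To verify $\varphi$ is well-defined it suffices to check that every defining commutation relation $[a_i,a_j] = 1$ is preserved: since $\{a,b\}$ is \emph{not} an edge, any edge $(a_i,a_j)$ must contain at least one vertex outside $\{a,b\}$, so at least one of $\varphi(a_i),\varphi(a_j)$ is $1$ and the relation holds trivially in $F_2$. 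Clearly $\varphi$ is surjective, so Corollary \ref{co:5.11} applies.

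For a finitely generated non-abelian residually free group $H$, I would argue directly from the definition of residual freeness. Since $H$ is non-abelian there exist $x,y \in H$ with $[x,y] \neq 1$. By residual freeness some homomorphism $\psi: H \to F$ to a free group $F$ satisfies $\psi([x,y]) \neq 1$, so $\psi(x)$ and $\psi(y)$ do not commute in $F$. Then $\psi(H) \leq F$ is a non-abelian subgroup of a free group, hence itself a non-abelian free group by the Nielsen--Schreier theorem. Thus $\psi: H \twoheadrightarrow \psi(H)$ is the required surjection, and Corollary \ref{co:5.11} completes this case.

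No step is truly a main obstacle: the entire argument is a reduction, via Corollary \ref{co:5.11}, to the existence of free non-abelian quotients, and the only ``input'' that is not immediate from definitions is the isomorphism $PB_3 \cong F_2 \times \mathbb{Z}$, which is a standard classical result and can simply be cited.
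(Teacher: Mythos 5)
Your proposal is correct and follows the same overall strategy as the paper: reduce everything to Corollary \ref{co:5.11} by exhibiting a surjection of $H$ onto a non-abelian free group. The pure braid case is identical in substance (strand-forgetting maps give $PB_n \twoheadrightarrow PB_3 \cong F_2 \times \mathbb{Z} \twoheadrightarrow F_2$, which is what the paper cites Birman for). For right angled Artin groups your argument is a slight simplification: the paper picks \emph{three} vertices whose induced subgraph is not a triangle, identifies the resulting quotient as one of $F_3$, $(\mathbb{Z}\times\mathbb{Z})\ast\mathbb{Z}$, or $F_2\times\mathbb{Z}$, and notes each maps onto $F_2$; you instead pick two non-adjacent vertices (which exist exactly because the graph is not complete, i.e.\ the group is non-abelian) and kill all other generators, landing on $F_2$ directly, with the well-definedness check you give. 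For the residually free case the paper simply asserts the existence of a free non-abelian quotient ``by definition,'' while you supply the short argument (separate a non-trivial commutator, then apply Nielsen--Schreier to the image); this is a welcome bit of extra care but not a different route. In short: same reduction, with a cleaner RAAG step and a fully written-out residually free step.
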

  \begin{proof} To prove 1) observe  that
  a pure braid group $PB_n, n\geq 3,$ has the group $PB_3$ as its epimorphic
  quotient  (see \cite{Birman}, for example), and the group $PB_3$ is isomorphic to $F_2 \times \mathbb{Z}$,
  so $PB_n, n \geq 3,$ has the free group $F_2$ as its quotient.

To see 2) Let $G = G(\Gamma)$ be  a non-abelian partially commutative group
  corresponding to a finite graph $\Gamma$. Then
there are three vertices in $\Gamma$, say $v_1, v_2, v_3$ such that
the complete subgraph $\Gamma_0$ of $\Gamma$ generated by these
vertices is not a triangle. In particular, a partially commutative
group $G_0 = G(\Gamma_0)$ is either a free group $F_3$ (no edges in
$\Gamma_0$), or $(\mathbb{Z} \times \mathbb{Z})  \ast \mathbb{Z}$
(only one edge in $\Gamma_0$), or $F_2 \times \mathbb{Z}$ (precisely
two edges in $\Gamma_0$). Notice that in all three cases the group
$G(\Gamma_0)$ has $F_2$ as its epimorphic quotient. Now, it suffices
to show that $G(\Gamma_0)$ is an epimorphic quotient of $G(\Gamma)$,
which is obtained from $G(\Gamma)$ by adding  to the standard
presentation of $G(\Gamma)$ all the relations of the type $v = 1$,
where $v$ is a vertex of $\Gamma$ different from $v_1, v_2, v_3$.
This shows that $F_2$ is a quotient of $G(\Gamma)$ 

By definition every non-abelian residually free group has a free non-abelian quotient, so 3) holds. 
\end{proof}

The following result shows that a wide class of free products of groups does not have rational proper verbal subsets. 

\begin{corollary}
\label{co:free-product-quotient}
Suppose that    $H = C \ast D$, where the factors  $C, D$ are finitely generated groups with infinite abelianizations.   Then for every proper  word $w$ the set $w[H]$ is not rational in $H.$

\end{corollary}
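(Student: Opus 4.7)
The plan is to reduce this directly to Corollary \ref{co:5.11}, which states that any group admitting a homomorphism onto a free non-abelian group has no rational proper verbal subsets. So the whole task is to construct a surjective homomorphism $H = C \ast D \twoheadrightarrow F_2$.

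First I would exploit the hypothesis that the abelianizations $C_{ab}$ and $D_{ab}$ are infinite. Since $C$ and $D$ are finitely generated, each abelianization is a finitely generated abelian group, and being infinite it has $\mathbb{Z}$ as a direct summand (by the structure theorem). Composing the abelianization map with the projection to this $\mathbb{Z}$ summand gives surjective homomorphisms $\varphi_C : C \twoheadrightarrow \mathbb{Z}$ and $\varphi_D : D \twoheadrightarrow \mathbb{Z}$.

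Next I would invoke the universal property of the free product: the pair $(\varphi_C, \varphi_D)$ extends uniquely to a homomorphism $\varphi : C \ast D \to \mathbb{Z} \ast \mathbb{Z}$, which is surjective because its image contains generators of each free factor. Identifying $\mathbb{Z} \ast \mathbb{Z}$ with the free non-abelian group $F_2$, we have a surjection $\varphi : H \twoheadrightarrow F_2$.

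Finally, Corollary \ref{co:5.11} applies immediately: for any proper word $w$, the set $w[H]$ is not rational in $H$. There is no real obstacle here; the only delicate point is verifying that an infinite finitely generated abelian group really does surject onto $\mathbb{Z}$, which is standard (equivalently, its torsion-free rank is at least one, so it surjects onto $\mathbb{Z}$).
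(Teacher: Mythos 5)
Your proof is correct and follows essentially the route the paper intends: since $C$ and $D$ are finitely generated with infinite abelianizations, each surjects onto $\mathbb{Z}$, so by the universal property of the free product $H = C \ast D$ surjects onto $\mathbb{Z} \ast \mathbb{Z} \cong F_2$, and Corollary \ref{co:5.11} (equivalently, Lemma \ref{th:5.10} together with Theorem \ref{the:5.6}) finishes the argument. The paper leaves the proof of this corollary implicit, and your write-up supplies exactly the missing details, including the correct justification that an infinite finitely generated abelian group has $\mathbb{Z}$ as a direct summand.
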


\end{document}